\numberwithin{equation}{section} \allowdisplaybreaks
\newtheorem{theorem}{\color{black}\indent Theorem}[section]
\newtheorem{lemma}{\color{black}\indent Lemma}[section]
\newtheorem{remark}{\color{black}\indent Remark}[section]
\begin{document}
\title{\LARGE\bf Study of Solutions for a quasilinear
Elliptic Problem With negative exponents
\\
\author{ Bin Guo\thanks{Corresponding author\newline \hspace*{6mm}{\it Email
addresses:}~bguo1029@gmail.com(Bin Guo),~wjgao@jlu.edu.cn (Wenjie
Gao).},~ Wenjie Gao, Yanchao Gao
\\
\small{\it{School of Mathematics, Jilin University,} \it{Changchun
130012, PR China}}} }
\date{} \maketitle
% begin of abstract etc.
{\bf Abstract:} The authors of this paper deal with the existence
 and regularities of weak solutions to
 the homogenous $\hbox{Dirichlet}$ boundary value problem for the equation
 $-\hbox{div}(|\nabla u|^{p-2}\nabla
 u)+|u|^{p-2}u=\frac{f(x)}{u^{\alpha}}$.  The authors apply the method of
regularization and $\hbox{Leray-Schauder}$ fixed point theorem as well as
a necessary compactness argument to prove the existence of solutions and then obtain some maximum norm
estimates by constructing three suitable iterative sequences. Furthermore, we find
that the critical exponent of $m$ in $\|f\|_{L^{m}(\Omega)}$. That is, when $m$ lies in different intervals, the solutions of the problem mentioned
belongs to different $\hbox{Sobolev}$ spaces. Besides,
we prove that the solution of this problem is not in
$W^{1,p}_{0}(\Omega)$ when $\alpha>2$, while the solution of this problem is in
$W^{1,p}_{0}(\Omega)$ when $1<\alpha<2$.

{\bf Mathematics Subject Classification(2000):} 35K55; 35J60; 35J70.

{\bf Keywords:} Negative exponents, $p-\hbox{Laplacian}$ Operators,
Regularities
\section{Introduction}
In this paper, we study existence and regularity of solutions for
the following quasi-linear elliptic problem
\begin{equation}
\begin{cases}
-\textrm{div}(|\nabla u|^{p-2}\nabla
u)+|u|^{p-2}u=\displaystyle\frac{f(x)}{u^{\alpha}},&x\in\Omega,\\
u=0,&x\in~\partial\Omega,
\end{cases}
\end{equation}
where $\Omega$ is a bounded open domain in $R^{N}(N\geqslant1)$ with
smooth boundary $\partial\Omega.$ $f\geqslant0,~~f\not\equiv0,~f\in
L^{1}(\Omega),~~p>1,~\alpha>0.$

Model $(1.1)$ may describe many physical phenomena such as
non-Newtonian flows in porous media, chemical heterogeneous
catalysts, nonlinear heat equations etc.
\cite{MOTANI,GDMFMLAP,LBTG,LBTG1}. When $p=2,$ many authors have
studied this problem. In 1991, $\hbox{Lazer}$ and $\hbox{Mckenna}$
\cite{ACLPJM} dealt with the case when $f$ was a continuous
function. They proved that the solution belonged to
$W^{1,2}_{0}(\Omega)$ if and only if $\alpha<3,$ while it was not in
$C^{1}(\overline{\Omega})$ if $\alpha>1.$ Later,
$\hbox{Lair,~Shaker, Zhang,~and~Cheng}$ generalized this results
\cite{ZZJC,YSSWYL,ALAS}. Moreover, $\hbox{Boccardo~and~Orsina}$ in
\cite{LBLO} discussed how the summability of $f$ and the values of
$\alpha$ affected the existence, regularity and nonexistence of
solutions. For more results, the interested readers may refer to
\cite{NHCSNS,MCGP}. In the case when $p\neq2,$
$\hbox{Giacomoni,~Schindler~and~Tak\'{a}\v{c}}$ in \cite{JGJST}
applied upper-lower solution method and a mountain pass theorem
to prove that this problem had multiple weak solutions. And then,
the authors in \cite{NHLKS} not only improved the results in
\cite{JGJST} but also obtained that the solution was not in
$W^{1,p}_{0}(\Omega)$ if $\alpha>\frac{2p-1}{p-1}.$  For more
properties of solutions, we may refer to \cite{PT,FCMGVR}. We point
out that the first eigenfunction of the $p-\hbox{Laplacian}$ operator with
homogenous boundary value problem plays a major role in
all of the papers mentioned above. Besides,
$\hbox{Boccardo~and~Orsina}$ claimed that the results such as Lemma
3.3,4.3 and 5.5 of \cite{LBLO} may be generalized to the case when
the linear differential operator was replaced by a monotone
differential operator, for example, $p-\hbox{Laplacian}$ operator.
We find that the singular problem involving $p-\hbox{Laplacian}$
operator is more complicated. Especially, whether or not the problem
has a solution in $W^{1,p}_{0}(\Omega)$ depends on the relations of
$\alpha,p,N$ as well as the summability of $f.$  In this paper, we
discuss separately the properties of solutions to the problem when
$\alpha=1,>1$ and $0<\alpha<1$. First, we apply the method of
regularization and $\hbox{Leray-Schauder}$ fixed point theorem as well as
a necessary compactness argument to overcome some difficulties
arising from the nonlinearity of the differential operator and the
singularity of the nonlinear terms and then obtain existence of solutions. Moveover, some maximum norm
estimates are obtained by constructing three suitable iterative sequences.
Secondly, we consider the case $0<\alpha<1.$  By means of the maximum principle,
$\hbox{Hopf}$ Lemma and a partition of unity argument, we prove that the solution of the  following problem
\begin{equation}
\begin{cases}
-\textrm{div}(|\nabla u_{1}|^{p-2}\nabla
u_{1})+|u_{1}|^{p-2}u_{1}=\displaystyle\frac{\min\{f(x),1\}}{(|u_{1}|+1)^{\alpha}},&x\in\Omega,\\
u_{1}=0,&x\in~\partial\Omega,
\end{cases}
\end{equation}
satisfies $\displaystyle\int_{\Omega}|u_{1}|^{-r}dx<\infty,~\forall~r<1.$ And then applying this result, we find an interesting phenomenon:
\begin{align*}
&(1)\hbox{If}~f(x)\in L^{m}(\Omega) ~for~m>m^{*}=\frac{Np}{Np-(1-\alpha)(N-p)}>1,~\hbox{then Problem (1.1) has a }\\
&~~~~~\hbox{solution in~} W^{1,p}_{0}(\Omega);\\
&(2)\hbox{If}~f(x)\in L^{m}(\Omega) ~for~1<m<m^{*},~\hbox{then Problem (1.1) has a solution in~} W^{1,q}_{0}(\Omega)~q<p.
\end{align*}
In other words, the value of $m$ in $L^{m}(\Omega)$ norm of $f(x)$
determines
whether or not Problem $(1.1)$ has a solution in
$W^{1,p}_{0}(\Omega).$
In the case of $\alpha>1,$
we prove that this problem does not have a solution in
$W^{1,p}_{0}(\Omega)$ when $\alpha>2$, where the function $f(x)$ is
permitted not to be strictly positive on $\overline{\Omega}$, our
result is more general than that of \cite{NHLKS}. Furthermore, we apply the result $\int_{\Omega}|u_{1}|^{-r}dx<\infty,~\forall~r<1$ to prove that
Problem $(1.1)$ has a unique solution in $W^{1,p}_{0}(\Omega)$ in the case when $1<\alpha<2.$

\section{The case when $\alpha=1$}
In this section, we apply the method of regularization and
$\hbox{Leray-Schauder}$ fixed point theorem to prove the existence of
solutions. Besides, we obtain $L^{\infty}$ norm estimates by Morse
iteration technique.

In order to prove the main results of this section, we consider the
following auxiliary problem
\begin{equation}
\begin{cases}
-\textrm{div}(|\nabla u_{n}|^{p-2}\nabla
u_{n})+|u_{n}|^{p-2}u_{n}=\displaystyle\frac{f_{n}(x)}{(|u_{n}|+\frac{1}{n})^{\alpha}},&x\in\Omega,\\
u_{n}=0,&x\in~\partial\Omega,
\end{cases}
\end{equation}
where $f_{n}=\min\{f(x),n\}.$

\begin{lemma}
Problem $(2.1)$ has a unique nonnegative solution $u_{n}\in
W^{1,p}_{0}(\Omega)\cap L^{\infty}(\Omega)$ for any fixed $n\in
N^{*}.$
\end{lemma}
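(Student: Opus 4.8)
The plan is to exploit the regularization already built into $(2.1)$ to remove the singularity: since $f_n=\min\{f,n\}\le n$ and $(|u_n|+\tfrac1n)^{\alpha}\ge n^{-\alpha}$, the nonlinear right-hand side is bounded, $0\le \frac{f_n}{(|u_n|+1/n)^{\alpha}}\le n^{1+\alpha}$. This turns $(2.1)$ into a monotone problem with an $L^{\infty}$ source, which I would solve by the Leray--Schauder fixed point theorem. Concretely, for $v\in L^{p}(\Omega)$ set $g_v(x)=\frac{f_n(x)}{(|v|+1/n)^{\alpha}}\in L^{\infty}(\Omega)$ and let $w=T(v)$ be the weak solution in $W^{1,p}_{0}(\Omega)$ of $-\mathrm{div}(|\nabla w|^{p-2}\nabla w)+|w|^{p-2}w=g_v$. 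Such $w$ exists and is unique because the operator $Aw:=-\mathrm{div}(|\nabla w|^{p-2}\nabla w)+|w|^{p-2}w$ is strictly monotone, coercive and hemicontinuous from $W^{1,p}_{0}(\Omega)$ to its dual, so the Browder--Minty theorem applies.

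Next I would verify the hypotheses of Leray--Schauder for $T:L^{p}(\Omega)\to L^{p}(\Omega)$. For continuity, if $v_k\to v$ in $L^{p}$ then, since $0\le g_{v_k}\le n^{1+\alpha}$ and $g_{v_k}\to g_v$ a.e.\ along subsequences, dominated convergence gives $g_{v_k}\to g_v$ in $L^{p'}$; the monotone solution map $g\mapsto w$ is continuous into $W^{1,p}_{0}(\Omega)$, and a standard subsequence argument upgrades this to convergence of the full sequence. Compactness then follows from the compact embedding $W^{1,p}_{0}(\Omega)\hookrightarrow\hookrightarrow L^{p}(\Omega)$. For the a priori bound, any $u$ with $u=\sigma T(u)$, $\sigma\in[0,1]$, satisfies $Au=\sigma^{p-1}g_u$ by the $(p-1)$-homogeneity of $A$; testing with $u$ and using $\sigma^{p-1}\le 1$ gives $\|u\|_{W^{1,p}_{0}}^{p}\le n^{1+\alpha}\|u\|_{L^{1}}\le C\|u\|_{W^{1,p}_{0}}$, hence $\|u\|_{W^{1,p}_{0}}^{p-1}\le C$ uniformly in $\sigma$. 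Leray--Schauder then yields a fixed point $u_n=T(u_n)\in W^{1,p}_{0}(\Omega)$, a weak solution of $(2.1)$.

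It remains to establish nonnegativity, boundedness, and uniqueness. Taking $u_n^{-}=\max\{-u_n,0\}$ as test function and using $f_n\ge 0$ yields $\|\nabla u_n^{-}\|_{p}^{p}+\|u_n^{-}\|_{p}^{p}\le 0$, hence $u_n\ge 0$. For the $L^{\infty}$ bound I would run a Stampacchia truncation (the Morse iteration mentioned in the section opening): testing with $(u_n-k)^{+}$ and using the bounded source $g_{u_n}\le n^{1+\alpha}$ together with the favorable sign of the zeroth-order term $|u_n|^{p-2}u_n$ yields $\|u_n\|_{L^{\infty}}\le C(n)$. For uniqueness, if $u,v$ both solve $(2.1)$, then testing the difference of the equations with $u-v$ gives $\langle Au-Av,\,u-v\rangle=\int_{\Omega}\big(g_u-g_v\big)(u-v)\,dx$; the left-hand side is $\ge 0$ by strict monotonicity of $A$, while the right-hand side is $\le 0$ because $s\mapsto \frac{f_n}{(s+1/n)^{\alpha}}$ is nonincreasing on $[0,\infty)$ (recall $u,v\ge 0$). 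Thus both sides vanish and $u=v$.

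The a priori estimate and the sign and uniqueness arguments are routine. The main obstacle is verifying the continuity and compactness of $T$, that is, that the nonlinear solution operator $g\mapsto w$ is genuinely continuous from $L^{p'}(\Omega)$ into $W^{1,p}_{0}(\Omega)$; this rests on the strict monotonicity ($S_{+}$-type) structure of the $p$-Laplacian and a careful subsequence/dominated-convergence passage for $g_{v_k}\to g_v$.
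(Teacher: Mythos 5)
Your proposal is correct and follows essentially the same route as the paper: freeze the unknown in the singular denominator, solve the resulting monotone problem, and apply Leray--Schauder with the a priori bound coming from the crude estimate $f_n/(|w|+\tfrac1n)^{\alpha}\leqslant n^{1+\alpha}$ together with the compact embedding $W^{1,p}_{0}(\Omega)\hookrightarrow L^{p}(\Omega)$ (the paper puts the parameter $\sigma$ on the source term rather than writing $u=\sigma T(u)$, but this is immaterial). The one substantive difference is in your favor: you actually prove the uniqueness asserted in the lemma, by pairing the strict monotonicity of $Aw=-\mathrm{div}(|\nabla w|^{p-2}\nabla w)+|w|^{p-2}w$ against the nonincreasing dependence of $s\mapsto f_n(s+\tfrac1n)^{-\alpha}$ on $s\geqslant0$, whereas the paper's proof establishes only existence, nonnegativity and boundedness (citing the maximum principle for the last two) and leaves uniqueness unaddressed.
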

\begin{proof}
Let $n\in N$ be fixed.  For any $w\in L^{p}(\Omega),~\sigma\in[0,1]$, we get that
the following problem has a unique solution $v\in
W^{1,p}_{0}(\Omega)\cap L^{\infty}(\Omega)$ by applying a
variational method
\begin{equation}
\begin{cases}
-\textrm{div}(|\nabla v|^{p-2}\nabla
v)+|v|^{p-2}v=\displaystyle\frac{\sigma f_{n}(x)}{(|w|+\frac{1}{n})^{\alpha}},&x\in\Omega,\\
v=0,&x\in~\partial\Omega.
\end{cases}
\end{equation}
So, for any $w\in L^{p}(\Omega),~\sigma\in[0,1]$, we may define the map
$\Gamma:~[0,1]\times L^{p}(\Omega)\rightarrow~L^{p}(\Omega)~\hbox{as}~\Gamma(\sigma,w)=v.$
It is easy to see that $\Gamma(0,w)=0.$ For all $v\in L^{p}(\Omega)$ satisfying $\Gamma(\sigma,v)=v,$ we have $\|v\|_{L^{p}(\Omega)}\leqslant C,$
where the positive constant $C$ depends on $n.$
In fact, multiplying the first identity in $(2.2)$ by $v$, and integrating
over $\Omega,$ we have
\begin{align*}
\int_{\Omega}|\nabla
v|^{p}dx+\int_{\Omega}|v|^{p}dx=\int_{\Omega}\frac{\sigma f_{n}}{(|w|+\frac{1}{n})^{\alpha}}v
dx\leqslant n^{\alpha+1}\int_{\Omega}|v|dx.
\end{align*}
Applying the embedding theorem $W^{1,p}(\Omega)\hookrightarrow
L^{1}(\Omega)$, we obtain
\begin{align*}
\|v\|^{p}_{W^{1,p}}\leqslant Cn^{\alpha+1}\|v\|_{W^{1,p}},
\end{align*}
which implies
$$\|v\|_{W^{1,p}}\leqslant Cn^{\frac{\alpha+1}{p-1}}.$$
Due to the embedding $W^{1,p}(\Omega)\overset{compact}\hookrightarrow L^{p}(\Omega),$ we get that the map $\Gamma$ is a compact operator and  $\|v\|_{L^{p}(\Omega)}\leqslant C(n).$ Then
by $\hbox{Leray-Schauder's}$ fixed point theorem, we know that there exists a
$u_{n}\in W^{1,p}_{0}(\Omega)$ such that $u_{n}=\Gamma(1,u_{n})$,
i.e.~Problem $(2.1)$ has a solution. Noting that
$\frac{f_{n}}{(|u_{n}|+\frac{1}{n})^{\alpha}}\geqslant0,$ the
maximum principle in \cite{LOAUNN,ADRD} shows that $u_{n}\geqslant0,~u_{n}\in L^{\infty}(\Omega).$
\end{proof}
\begin{lemma}
The sequence $u_{n}$ is increasing with respect to $n$. $u_{n}>0$ in
$\Omega'$ for any $\Omega'\subset\subset\Omega,$ and there exists a
positive constant $C_{\Omega'}$ (independent of $n$) such that for
all $n\in N^{*}$
\begin{align}
u_{n}\geqslant C_{\Omega'}>0,~for~every~x\in\Omega'.
\end{align}
\end{lemma}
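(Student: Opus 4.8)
The plan is to prove the three assertions in the listed order, obtaining the interior positivity and the uniform bound $(2.3)$ as consequences of the monotonicity combined with a strong maximum principle applied to the first iterate $u_1$. For the monotonicity I would compare the equations satisfied by $u_n$ and $u_{n+1}$. Since both are nonnegative by Lemma 2.1, I subtract the two instances of $(2.1)$ and test the resulting identity with $(u_n-u_{n+1})^{+}$. On the set $\{u_n>u_{n+1}\}$ the two principal contributions
\[
\int_{\{u_n>u_{n+1}\}}\bigl(|\nabla u_n|^{p-2}\nabla u_n-|\nabla u_{n+1}|^{p-2}\nabla u_{n+1}\bigr)\cdot\nabla(u_n-u_{n+1})\,dx
\]
and $\int_{\{u_n>u_{n+1}\}}(u_n^{p-1}-u_{n+1}^{p-1})(u_n-u_{n+1})\,dx$ are both nonnegative, by the strict monotonicity of $\xi\mapsto|\xi|^{p-2}\xi$ and of $t\mapsto t^{p-1}$. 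The decisive point is that the right-hand side is nonpositive there: since $f_n=\min\{f,n\}\leqslant f_{n+1}$ and, on $\{u_n>u_{n+1}\}$, one has $u_n+\frac1n>u_{n+1}+\frac1{n+1}$ (using both $u_n>u_{n+1}$ and $\frac1n>\frac1{n+1}$), it follows that $\frac{f_n}{(u_n+1/n)^{\alpha}}\leqslant\frac{f_{n+1}}{(u_{n+1}+1/(n+1))^{\alpha}}$. Hence both sides vanish, and the strictly monotone zeroth-order term forces $\{u_n>u_{n+1}\}$ to have measure zero, i.e. $u_n\leqslant u_{n+1}$ a.e.

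For the positivity, monotonicity gives $u_n\geqslant u_1$, so it suffices to show $u_1>0$ in $\Omega$. The iterate $u_1$ solves $-\textrm{div}(|\nabla u_1|^{p-2}\nabla u_1)+u_1^{p-1}=g$ with $g=\frac{\min\{f,1\}}{(u_1+1)^{\alpha}}\geqslant0$; here $g\leqslant\min\{f,1\}\leqslant1$ is bounded, and $g\not\equiv0$ since $f\geqslant0,~f\not\equiv0$. Because $g\in L^{\infty}(\Omega)$, the $C^{1,\gamma}_{loc}$ regularity theory for the $p$-Laplacian yields $u_1\in C^{1}(\Omega)$, and $\textrm{div}(|\nabla u_1|^{p-2}\nabla u_1)=u_1^{p-1}-g\leqslant u_1^{p-1}$. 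I would then invoke the strong maximum principle (\cite{LOAUNN,ADRD}) with comparison function $\beta(s)=s^{p-1}$, whose Osgood-type condition $\int_0^{\delta}(s\,\beta(s))^{-1/p}\,ds=\int_0^{\delta}s^{-1}\,ds=\infty$ holds; were $u_1$ to vanish at an interior point it would follow that $u_1\equiv0$, and then the equation would give $\min\{f,1\}\equiv0$, contradicting $f\not\equiv0$. Therefore $u_1>0$ in $\Omega$, and by monotonicity $u_n>0$ in any $\Omega'\subset\subset\Omega$.

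Finally, for the uniform estimate $(2.3)$, fix $\Omega'\subset\subset\Omega$. Since $u_1$ is continuous and strictly positive on the compact set $\overline{\Omega'}$, it attains a positive minimum $C_{\Omega'}:=\min_{\overline{\Omega'}}u_1>0$, which depends on $\Omega'$ but is independent of $n$; combining this with the monotonicity above gives $u_n\geqslant u_1\geqslant C_{\Omega'}$ on $\Omega'$ for every $n\in N^{*}$. I expect the main obstacle to be the monotonicity step—specifically the sign analysis showing that the comparison of the two regularized singular nonlinearities runs in the correct direction on $\{u_n>u_{n+1}\}$—together with the justification of the quasilinear strong maximum principle, which requires both the interior $C^{1}$ regularity and the verification of the integral condition on $\beta$.
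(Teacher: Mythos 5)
Your proposal is correct and follows essentially the same route as the paper: monotonicity is obtained by testing the difference of the two equations with $(u_{n}-u_{n+1})^{+}$ and exploiting the monotonicity of $\xi\mapsto|\xi|^{p-2}\xi$ together with $f_{n}\leqslant f_{n+1}$ and the ordering of the regularized denominators, and positivity plus the uniform lower bound $(2.3)$ then reduce to showing $u_{1}>0$ via the strong maximum principle. The only (harmless) differences are that you conclude $u_{n}\leqslant u_{n+1}$ from the strictly monotone zeroth-order term rather than from $\int_{\Omega}|\nabla(u_{n}-u_{n+1})^{+}|^{p}\,dx=0$, and you spell out the $C^{1,\gamma}$ regularity and the V\'azquez Osgood condition that the paper leaves implicit.
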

\begin{proof}
Choosing $(u_{n}- u_{n+1})_{+}=\max\{u_{n}-u_{n+1},0\}$ as a test
function, observing that
\begin{align*}
&(|\nabla u_{n}|^{p-2}\nabla u_{n}-|\nabla u_{n+1}|^{p-2}\nabla u_{n+1})(\nabla u_{n}-\nabla u_{n+1})_{+}\geqslant0,\\
&[(u_{n+1}+\frac{1}{n+1})^{\alpha}-(u_{n}+\frac{1}{n})^{\alpha}](u_{n}-u_{n+1})_{+}\leqslant0,~~for~every~~\alpha>0,\\
&0\leqslant f_{n}\leqslant f_{n+1},
\end{align*}
we get
$$0\leqslant\int_{\Omega}|\nabla (u_{n}-u_{n+1})_{+}|^{p}dx\leqslant0.$$
This inequality yields $(u_{n}-u_{n+1})_{+}=0$~a.e.~in~$\Omega$,
that is $u_{n}\leqslant u_{n+1}$ for every $n\in N^{*}.$ Since the
sequence $u_{n}$ is increasing with respect to $n$, we only need to
prove that $u_{1}$ satisfies Inequality $(2.3).$ According to Lemma
2.1, we know that there exists a positive constant $C$ (depending
only on $|\Omega|,N,p$) such that
$\|u_{1}\|_{L^{\infty}(\Omega)}\leqslant
C\|f_{1}\|_{L^{\infty}(\Omega)}\leqslant C,$ then
\begin{align*}
-\textrm{div}(|\nabla u_{1}|^{p-2}\nabla
u_{1})+|u_{1}|^{p-2}u_{1}=\frac{f_{1}}{(u_{1}+1)^{\alpha}}
\geqslant\frac{f_{1}}{(C+1)^{\alpha}}.
\end{align*}
Noting that $\frac{f_{1}}{(C+1)^{\alpha}}\geqslant0,~
\frac{f_{1}}{(C+1)^{\alpha}}\not\equiv0,$ the strong maximum
principle implies that $u_{1}>0~\text{in}~\Omega,$ i.e. Inequality
$(2.3)$ holds.
\end{proof}
Our main results are the following
\begin{theorem}
Let $f$ be a nonnegative function in $L^{1}(\Omega)~(f\not\equiv0)$.
Then Problem $(1.1)$ has a solution $u\in W^{1,p}_{0}(\Omega)$
satisfying
\begin{equation*}
\int_{\Omega}|\nabla u|^{p-2}\nabla u\nabla\varphi
dx+\int_{\Omega}|u|^{p-2}u\varphi
dx=\int_{\Omega}\frac{f\varphi}{u}dx,~~\forall \varphi\in
C^{\infty}_{0}(\Omega).
\end{equation*}
Moreover, suppose that $f\in L^{m}(m\geqslant1),$ then the solution
$u$ of Problem $(1.1)$ satisfies the following properties
\begin{align*}
&~(i)~\text{If} ~~m>\frac{N}{p}~~and~~2-\frac{2}{m}<p<N,
~then ~u\in L^{\infty}(\Omega). \\
&~(ii)~\text{If}~~1\leqslant m<\frac{N}{p}, ~then ~u\in
L^{s}(\Omega), for~1<s\leqslant\frac{pNm}{N-pm}.
\end{align*}
\end{theorem}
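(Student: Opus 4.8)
The plan is to obtain the solution as a limit of the approximate solutions $u_n$ of $(2.1)$ (with $\alpha=1$) furnished by Lemmas 2.1 and 2.2. The first and decisive estimate is a uniform energy bound: testing $(2.1)$ with $u_n$ itself gives
\begin{equation*}
\int_\Omega |\nabla u_n|^p\,dx + \int_\Omega |u_n|^p\,dx = \int_\Omega \frac{f_n\,u_n}{u_n + \frac1n}\,dx \leq \int_\Omega f_n\,dx \leq \|f\|_{L^1(\Omega)},
\end{equation*}
where the point special to $\alpha=1$ is that $u_n/(u_n+\tfrac1n)\le 1$, so the singular term is controlled by $\|f\|_{L^1}$ independently of $n$. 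Hence $\{u_n\}$ is bounded in $W^{1,p}_0(\Omega)$; since by Lemma 2.2 it is also increasing, the whole sequence converges weakly to some $u\in W^{1,p}_0(\Omega)$, and by Rellich--Kondrachov strongly in $L^p(\Omega)$ and almost everywhere.

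Next I would recover almost-everywhere convergence of the gradients, which is what the nonlinear principal part requires. Because the test functions in the statement lie in $C^\infty_0(\Omega)$, only local gradient convergence is needed, and this is fortunate: Lemma 2.2 gives a uniform lower bound $u_n\ge C_{\Omega'}>0$ on each $\Omega'\subset\subset\Omega$, so the right-hand side stays bounded in $L^1(\Omega')$. Choosing a cutoff $\psi\in C^\infty_0(\Omega)$ with $\psi\equiv1$ on $\Omega'$ and testing the equations with $\psi(u_n-u)$, I would exploit the monotonicity inequality for the $p$-Laplacian together with $u_n-u\to0$ in $L^p$ to show
\begin{equation*}
\int_{\Omega'}\bigl(|\nabla u_n|^{p-2}\nabla u_n-|\nabla u|^{p-2}\nabla u\bigr)\cdot\nabla(u_n-u)\,dx\longrightarrow0 .
\end{equation*}
The standard strict-monotonicity (Boccardo--Murat) argument then forces $\nabla u_n\to\nabla u$ a.e.\ on $\Omega'$, and hence a.e.\ on $\Omega$ after exhausting the domain.

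With this in hand the passage to the limit is routine. Fix $\varphi\in C^\infty_0(\Omega)$ with support $K\subset\subset\Omega$. The fluxes $|\nabla u_n|^{p-2}\nabla u_n$ are bounded in $L^{p/(p-1)}(\Omega)$ and converge a.e., hence weakly, so the principal term passes to the limit against $\nabla\varphi$; likewise $|u_n|^{p-2}u_n\to|u|^{p-2}u$ strongly in $L^{p/(p-1)}(\Omega)$ by Vitali's theorem. For the singular term, on $K$ one has $0\le \frac{f_n}{u_n+1/n}|\varphi|\le \frac{\|\varphi\|_{L^\infty}}{C_K}\,f\in L^1(\Omega)$ while $\frac{f_n}{u_n+1/n}\varphi\to\frac{f}{u}\varphi$ a.e., so dominated convergence gives $\int_\Omega\frac{f_n\varphi}{u_n+1/n}\,dx\to\int_\Omega\frac{f\varphi}{u}\,dx$. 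This establishes the weak formulation.

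Finally, the regularity statements (i) and (ii) I would prove as bounds uniform in $n$ for $u_n$, then let $n\to\infty$. For (ii) I would run a Moser-type iteration, testing $(2.1)$ with a suitable power of $u_n$ and combining the Sobolev inequality $\|w\|_{L^{Np/(N-p)}(\Omega)}\le C\|\nabla w\|_{L^p(\Omega)}$ with H\"older's inequality on $\int f_n(\cdot)$; for (i) I would use Stampacchia's truncation $G_k(u_n)=(u_n-k)^+$ on the super-level sets $A_k=\{u_n>k\}$, where $\frac{f_n}{u_n+1/n}\le \frac{f}{k}$ supplies an extra decaying factor that improves the recursive inequality for $|A_k|$. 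The ranges $m>N/p$ with $2-\tfrac2m<p<N$ (respectively $1\le m<N/p$) are exactly what make these iterations close and yield the stated exponents. The main obstacle throughout is the second step: weak $W^{1,p}_0$ convergence alone does not let one pass to the limit in $|\nabla u_n|^{p-2}\nabla u_n$, and the degeneration of the Lemma 2.2 lower bound near $\partial\Omega$ is precisely what forces a localized monotonicity argument rather than a global one.
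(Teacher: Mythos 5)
Your proposal is sound and reaches the same conclusion, but it diverges from the paper's proof at two points. For the identification of the limit of the nonlinear term, the paper keeps only the weak limit $\vec V$ of $|\nabla u_n|^{p-2}\nabla u_n$ in $L^{p/(p-1)}$, tests with $u_n-\xi$ for smooth $\xi$, and runs the global Minty--Browder trick ($\xi=u\pm\varepsilon\varphi$, $\varepsilon\to0^+$) to conclude $\vec V=|\nabla u|^{p-2}\nabla u$; you instead propose the localized Boccardo--Murat route, proving $\nabla u_n\to\nabla u$ a.e.\ via a cutoff $\psi$ and the monotonicity of the $p$-Laplacian. Both work, and yours has the advantage of producing a.e.\ gradient convergence (reusable elsewhere), but note one point you should make explicit: when you test with $\psi(u_n-u)$, the right-hand term $\int\frac{f_n}{u_n+1/n}\psi(u_n-u)\,dx$ cannot be shown to vanish by domination alone, since $f$ is only in $L^1$ and $u$ need not be bounded, so $fu$ need not be locally integrable; you must instead use that $u_n\uparrow u$ (Lemma 2.2) makes this term nonpositive, which is all the one-sided inequality $\limsup\int\psi\,(|\nabla u_n|^{p-2}\nabla u_n-|\nabla u|^{p-2}\nabla u)\cdot\nabla(u_n-u)\,dx\leqslant0$ requires (or work with truncations $T_k$). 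The paper avoids this issue because $\frac{f_nu_n}{u_n+1/n}\leqslant f$ pointwise. For the regularity, the paper proves (i) by a Moser iteration with the sequences $\beta_k$ and the logarithmic quantities $F_k=\beta_k\ln\|u_n\|_{\beta_k}$, and proves (ii) with a single test function $u_n^{p(\delta-1)+1}$, $\delta=\frac{m(N-p)}{N-mp}$, plus Sobolev; you propose Stampacchia truncation $G_k(u_n)=(u_n-k)^+$ for (i), which is cleaner here because the singular term supplies the extra factor $1/k$ on $A_k=\{u_n>k\}$ and the resulting exponent in the level-set recursion exceeds $1$ exactly when $m>N/p$, and a Moser-type iteration for (ii), where in fact one well-chosen power already closes the estimate. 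These are legitimate alternative implementations of the same strategy.
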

\begin{proof}
$\mathbf{Part 1}$ (Existence).
Multiplying the first identity in Problem $(2.1)$ by $u_{n}$ and
integrating over $\Omega,$ we get
\begin{equation*}
\int_{\Omega}|\nabla
u_{n}|^{p}dx+\int_{\Omega}|u_{n}|^{p}dx=\int_{\Omega}\frac{f_{n}u_{n}}{u_{n}+\frac{1}{n}}dx\leqslant\int_{\Omega}|f_{n}|dx
\leqslant\int_{\Omega}|f|dx,
\end{equation*}
i.e. $\|u_{n}\|_{W^{1,p}_{0}(\Omega)}\leqslant
\|f\|^{\frac{1}{p}}_{L^{1}(\Omega)}.$

Then we know that there exist $u\in W^{1,p}(\Omega)$ and
$\vec{V}\in L^{\frac{p}{p-1}}(\Omega,~R^{N})$ such that
\begin{equation}
\begin{cases}
\begin{split}
&u_{n}\rightharpoonup u ~~weakly~in~~
W^{1,p}(\Omega)~and~strongly~in~L^{p}(\Omega);\\
&u_{n}\rightarrow u~~~~ a,e.~in~~~\Omega;\\
&|\nabla u_{n}|^{p-2}\nabla u_{n}\rightharpoonup \vec{V}
~~~weakly~in~~~ L^{\frac{p}{p-1}}(\Omega,~R^{N}).
\end{split}
\end{cases}
\end{equation}
For every $\varphi\in C^{\infty}_{0}(\Omega),$ we get from Inequality
$(2.3)$ that
\begin{align*}
0\leqslant\Big|\frac{f_{n}\varphi}{u_{n}+\frac{1}{n}}\Big|\leqslant\frac{\|\varphi\|_{L^{\infty}}}{C_{\Omega'}}f(x),
\end{align*}
where $\Omega'=\{x:\varphi\neq0\}.$ Then applying \hbox{Lebesgue}
Dominated Convergence Theorem, one has that
\begin{equation}
\lim\limits_{n\rightarrow+\infty}\int_{\Omega}\frac{f_{n}\varphi}{u_{n}+\frac{1}{n}}dx=\int_{\Omega}\frac{f\varphi}{u}dx.
\end{equation}
Since $u_{n}$ satisfies the following identity
\begin{equation}
\int_{\Omega}|\nabla u_{n}|^{p-2}\nabla u_{n}\nabla\varphi
dx+\int_{\Omega}|u_{n}|^{p-2}u_{n}\varphi
dx=\int_{\Omega}\frac{f_{n}\varphi}{u_{n}+\frac{1}{n}}dx,~~\forall
\varphi\in C^{\infty}_{0}(\Omega).
\end{equation}
Combining $(2.4)-(2.6)$, we have
\begin{equation}
\int_{\Omega}\vec{V}\nabla\varphi dx+\int_{\Omega}|u|^{p-2}u\varphi
dx=\int_{\Omega}\frac{f\varphi}{u}dx,~~\forall \varphi\in
C^{\infty}_{0}(\Omega).
\end{equation}
Next, we shall prove $\vec{V}=|\nabla u|^{p-2}\nabla
u,~~a.e.~in~\Omega.$ Since $C^{\infty}_{0}(\Omega)$ is dense in $W^{1,p}_{0}(\Omega)$, we may choose  $u_{n}-\xi$ with $\xi\in
C^{\infty}_{0}(\Omega)$ as a test function to get
\begin{equation}
\int_{\Omega}|\nabla u_{n}|^{p-2}\nabla u_{n}\nabla(u_{n}-\xi)
dx+\int_{\Omega}|u_{n}|^{p-2}u_{n}(u_{n}-\xi)
dx=\int_{\Omega}\frac{f_{n}(u_{n}-\xi)}{u_{n}+\frac{1}{n}}dx.
\end{equation}
Noting that $(|\nabla u_{n}|^{p-2}\nabla u_{n}-|\nabla
\xi|^{p-2}\nabla \xi)(\nabla u_{n}-\nabla\xi)\geqslant0,$ we obtain
that
\begin{equation}
\int_{\Omega}|\nabla \xi|^{p-2}\nabla \xi\nabla(u_{n}-\xi)
dx+\int_{\Omega}|u_{n}|^{p-2}u_{n}(u_{n}-\xi)
dx\leqslant\int_{\Omega}\frac{f_{n}(u_{n}-\xi)}{u_{n}+\frac{1}{n}}dx.
\end{equation}
Letting $ n\rightarrow\infty$ in $(2.9)$ and using Identity $(2.7)$,
one get
\begin{equation}
\int_{\Omega}(|\nabla \xi|^{p-2}\nabla\xi-\vec{V})\nabla(u-\xi)
dx\leqslant0,~~\forall~\xi\in C^{\infty}_{0}(\Omega).
\end{equation}
Choosing $\xi=u\pm\varepsilon\varphi~\hbox{with}~\varphi\in C^{\infty}_{0}(\Omega)$ and letting $\varepsilon\rightarrow0^{+}$, we have
$\vec{V}=|\nabla u|^{p-2}\nabla u,~a.e.~in~\Omega.$
This proves that $u$ is a weak solution of Problem $(1.1)$.

$\mathbf{Part 2}$ (Regularity). (i) choosing
$u_{n}^{\frac{\beta_{k}}{m'}}u_{n}(\beta_{k}\geqslant m')$ as a test
function in $(2.6)$, we obtain
\begin{equation}
\begin{split}
(\frac{\beta_{k}}{m'}+1)\int_{\Omega}|\nabla
u_{n}|^{p}u_{n}^{\frac{\beta_{k}}{m'}}dx
&+\int_{\Omega}u_{n}^{\frac{\beta_{k}}{m'}+p}dx=
\int_{\Omega}\frac{f_{n}(x)}{u_{n}+\frac{1}{n}}u_{n}^{\frac{\beta_{k}}{m'}+1}dx\\
&\leqslant\int_{\Omega}f(x)u_{n}^{\frac{\beta_{k}}{m'}}
\leqslant\|f\|_{L^{m}(\Omega)}\|u_{n}\|^{\frac{\beta_{k}}{m'}}_{L^{\beta_{k}}}.
\end{split}
\end{equation}
Define two sequences
\begin{align*}
\beta_{k+1}=\frac{\beta_{k}^{*}p^{*}}{pm'},~~\beta_{k}^{*}=\beta_{k}+pm'
,\beta_{1}=pm',~p^{*}=\frac{Np}{N-p},~m'=\frac{m}{m-1}.
\end{align*}
According to the condition
$m>\frac{N}{p},~2-\frac{2}{m}<p\Rightarrow 2<\beta_{1}<p^{*}$ and
applying $\textrm{Sobolev}$ embedding theorem
$W^{1,p}_{0}(\Omega)\hookrightarrow L^{q}(\Omega)$ with $1\leqslant
q\leqslant p^{*}$, we get
\begin{equation}
\|u_{n}\|_{L^{\beta_{1}}(\Omega)}^{\beta_{1}}=\|u_{n}\|_{L^{pm'}(\Omega)}^{pm'}\leqslant
C\|u_{n}\|_{W^{1,p}_{0}(\Omega)}\leqslant
C\|f\|^{\frac{1}{p}}_{L^{1}(\Omega)}.
\end{equation}
Once again, applying $\textrm{Sobolev}$ embedding theorem
$W^{1,p}_{0}(\Omega)\hookrightarrow L^{p^{*}}(\Omega)$, we get
\begin{equation}
\mu^{-p}\Big(\frac{pm'}{\beta_{k}^{*}}\Big)^{p}\|u_{n}^{\frac{\beta^{*}_{k}}{pm'}}\|^{p}_{L^{p^{*}}(\Omega)}
\leqslant\Big(\frac{pm'}{\beta_{k}^{*}}\Big)^{p}\|\nabla
u^{\frac{\beta^{*}_{k}}{pm'}}_{n}\|^{p}_{L^{p}(\Omega)}\leqslant\int_{\Omega}|\nabla
u_{n}|^{p}u_{n}^{\frac{\beta_{k}}{m'}}dx.
\end{equation}
Combining $(2.13)$ with $(2.11)$ and using the definition of
$\beta_{k}$, we have
\begin{equation}
\|u_{n}\|_{\beta_{k+1}}^{\frac{\beta^{*}_{k}}{m'}}\leqslant\mu^{p}
\Big(\frac{\beta_{k}^{*}}{pm'}\Big)^{p}\|f\|_{m}\|u_{n}\|^{\frac{\beta_{k}}{m'}}_{\beta_{k}},
\end{equation}
with $\|u_{n}\|_{\sigma}=\|u_{n}\|_{L^{\sigma}(\Omega)}.$ In order
to prove that $\|u_{n}\|_{\infty}$ is bounded with a bound
independent of $n$, we use a trick in \cite{MOTANI}. Let
$F_{k}=\beta_{k}\ln \|u_{n}\|_{\beta_{k}}.$ By means of Inequality
$(2.14)$, we get
\begin{equation}
\begin{split}
F_{k+1}&=\beta_{k+1}\ln \|u_{n}\|_{\beta_{k+1}}\\
&\leqslant\frac{\beta_{k+1}m'}{\beta^{*}_{k}}\Big[p\ln\mu+p\ln\frac{\beta_{k}^{*}}{pm'}
+\ln\|f\|_{m}+\frac{\beta_{k}}{m'}\ln\|u_{n}\|_{\beta_{k}}\Big]\\
&\leqslant p^{*}[\ln\mu+\ln\frac{\beta_{k}^{*}}{pm'}]+
\frac{p^{*}}{p}\ln\|f\|_{m}+\frac{p^{*}}{pm'}F_{k}\\
&\leqslant \lambda_{k}+\delta F_{k},
\end{split}
\end{equation}
with
$\lambda_{k}=p^{*}\ln(\mu\beta^{*}_{k}\|f\|_{m}^{\frac{1}{p}}),~\delta=\frac{p^{*}}{pm'}>1.$
Since
$\beta_{k}=\frac{(N+p)m-2N}{(m-1)(mp-N)}\delta^{k-1}-\frac{Npm}{mp-N}$,
we obtain that
\begin{align*}
\lambda_{k}&=p^{*}\ln\Big[\mu\|f\|^{\frac{1}{p}}_{m}
\Big(\frac{(N+p)m-2N}{(m-1)(mp-N)}\delta^{k-1}-\frac{Npm}{mp-N}\Big)\Big]\\
&\leqslant p^{*}\ln[\mu\|f\|^{\frac{1}{p}}_{m}]+p^{*}\ln\delta^{k-1}
+p^{*}\ln\Big[\frac{(N+p)m-2N}{(m-1)(mp-N)}\Big]\\
&\leqslant b+p^{*}(k-1)\ln \delta,
\end{align*}
where
$b=p^{*}\ln\Big[\mu\|f\|^{\frac{1}{p}}_{m}\frac{(N+p)m-2N}{(m-1)(mp-N)}\Big].$
Using Inequalities $(2.15)$ and
$\beta_{k}=\frac{(N+p)m-2N}{(m-1)(mp-N)}\delta^{k-1}-\frac{Npm}{mp-N}$,
we have
\begin{align*}
F_{k}&\leqslant
\delta^{k-1}F_{1}+\lambda_{k-1}+\delta\lambda_{k-2}+\cdots+\delta^{k-2}\lambda_{1}\\
&\leqslant
\delta^{k-1}\ln(C\|f\|_{1}^{\frac{1}{p}})+\frac{(b+p^{*}\ln\delta)
\delta}{(\delta-1)^{2}}(\delta^{k-1}-1)+\frac{b}{1-\delta};\\
\frac{F_{k}}{\beta_{k}}
&\leqslant\frac{\delta^{k-1}\ln(C\|f\|_{1}^{\frac{1}{p}})+
\frac{(b+p^{*}\ln\delta)\delta}{(\delta-1)^{2}}(\delta^{k-1}-1)+\frac{b}{1-\delta}}
{\frac{(N+p)m-2N}{(m-1)(mp-N)}\delta^{k-1}-\frac{Npm}{mp-N}}\\
&~~~~~~~~\rightarrow\frac{(m-1)(mp-N)\Big[(\delta-1)^{2}\ln(C\|f\|_{1}^{\frac{1}{p}})+\delta
p^{*}\ln\delta+\delta b\Big]}{(\delta-1)^{2}(Nm+pm-2N)}:=
d_{0}~~as~~k\rightarrow+\infty.
\end{align*}
So
\begin{align*}
\|u_{n}\|_{\infty}\leqslant\lim\limits_{k\rightarrow\infty}\sup\|u_{n}\|_{\beta_{k}}\leqslant
\lim\limits_{k\rightarrow\infty}\sup\exp(\frac{F_{k}}{\beta_{k}})=e^{d_{0}}.
\end{align*}
Applying $\textrm{Fatou's}$~lemma, we
get$$\|u\|_{\infty}\leqslant\lim\limits_{n\rightarrow\infty}\inf\|u_{n}\|_{\infty}\leqslant
e^{d_{0}}.$$

 (ii).~Define $\delta=\frac{m(N-p)}{N-m p}.$ According to the
condition $1<m<\frac{N}{p}$, we get $\delta>1$. Choosing
$u_{n}^{p(\delta-1)+1}$ as a test function, and applying
$\hbox{H\"{o}lder's}$ inequality,  we arrive at
\begin{equation}
\begin{split}
(p(\delta-1)+1)\int_{\Omega}|\nabla u_{n}|^{p}u_{n}^{p(\delta-1)}dx+
\int_{\Omega}u_{n}^{p\delta}dx&=\int_{\Omega}\frac{f_{n}}{u_{n}+\frac{1}{n}}u_{n}^{p(\delta-1)+1}dx
dx\\&\leqslant\int_{\Omega}f(x)u_{n}^{p(\delta-1)}dx\\
&\leqslant\Big(\int_{\Omega}u_{n}^{p(\delta-1)\frac{m}{m-1}}dx\Big)^{\frac{m-1}{m}}\|f\|_{L^{m}(\Omega)}.
\end{split}
\end{equation}
Moreover, applying $\textrm{Sobolev}$ embedding theorem and using
Inequality $(2.16)$, we get
\begin{equation}
\begin{split}
\Big(\int_{\Omega}|u_{n}|^{\frac{\delta
Np}{N-p}}dx\Big)^{\frac{N-p}{N}}&\leqslant C\int_{\Omega}|\nabla
u_{n}^{\delta}|^{p}dx\leqslant
C\delta^{p}\int_{\Omega}u_{n}^{p(\delta-1)}|\nabla u_{n}|^{p}dx
\\&\leqslant\Big(\int_{\Omega}u_{n}^{p(\delta-1)\frac{m}{m-1}}dx\Big)^{\frac{r-1}{r}}\|f\|_{L^{m}(\Omega)}.
\end{split}
\end{equation}
Then for any $~1\leqslant s\leqslant\frac{N pm}{N-mp}$, we have
\begin{equation*}
\|u_{n}\|_{L^{s}(\Omega)}\leqslant
C\|f\|^{\frac{1}{p}}_{L^{m}(\Omega)}.
\end{equation*}
\end{proof}

\section{The case when $0<\alpha<1$}
In this section, consider the case $0<\alpha<1$. We find that
Problem $(1.1)$ has a solution in $W^{1,p}_{0}(\Omega)$ when $f\in
L^{m}(\Omega)$ for $1<m^{*}\leqslant m$ with
$m^{*}=\frac{Np}{Np-(1-\alpha)(N-p)}$, but it is not clear whether
Problem $(1.1)$ has a solution in $W^{1,p}_{0}(\Omega)$ when $f\in
L^{m}(\Omega)$ for $1<m<m^{*}$. Fortunately, we prove that Problem $(1.1)$
exists a solution in $W^{1,q}_{0}(\Omega)$ for $1<q<p$. Our main
results are the following
\begin{lemma}
Let $u_{n}$ be the solution of Problem $(2.1)$ and suppose that
$f\in L^{m}(\Omega)$, with $m\geqslant m^{*}.$ Then
$\|u_{n}\|_{W^{1,p}_{0}(\Omega)}\leqslant
\|f\|^{\frac{1}{p}}_{L^{m}(\Omega)}.$
\end{lemma}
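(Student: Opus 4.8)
The plan is to use the solution $u_n$ itself as a test function in $(2.1)$ and to control the singular right-hand side by H\"older's inequality exactly at the critical summability $m^{*}$, where the Sobolev embedding just barely applies. Since Lemma 2.1 guarantees $u_n\in W^{1,p}_0(\Omega)\cap L^\infty(\Omega)$ with $u_n\ge 0$, multiplying the equation by $u_n$ and integrating over $\Omega$ gives, exactly as in the computation for Theorem 2.1,
\begin{equation*}
\|u_n\|_{W^{1,p}_0(\Omega)}^{p}=\int_\Omega|\nabla u_n|^p\,dx+\int_\Omega|u_n|^p\,dx=\int_\Omega\frac{f_n\,u_n}{(u_n+\frac1n)^{\alpha}}\,dx .
\end{equation*}
Because $0<\alpha<1$ and $u_n+\frac1n\ge u_n\ge 0$, the singular factor obeys $\frac{u_n}{(u_n+\frac1n)^{\alpha}}\le u_n^{1-\alpha}$, and $f_n\le f$, so the right-hand side is at most $\int_\Omega f\,u_n^{1-\alpha}\,dx$, which is finite for each fixed $n$ since $u_n\in L^\infty(\Omega)$.

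First I would pin down the exponents. Applying H\"older's inequality with $m$ and $m'=\frac{m}{m-1}$ and rewriting the resulting power of $u_n$ as a norm,
\begin{equation*}
\int_\Omega f\,u_n^{1-\alpha}\,dx\le\|f\|_{L^m(\Omega)}\Big(\int_\Omega u_n^{(1-\alpha)m'}\,dx\Big)^{\frac1{m'}}
=\|f\|_{L^m(\Omega)}\,\|u_n\|_{L^{(1-\alpha)m'}(\Omega)}^{1-\alpha}.
\end{equation*}
The arithmetic identity that drives the whole lemma is that at $m=m^{*}=\frac{Np}{Np-(1-\alpha)(N-p)}$ one has exactly $(1-\alpha)m'=\frac{Np}{N-p}=p^{*}$; since $m'$ is decreasing in $m$, the hypothesis $m\ge m^{*}$ forces $(1-\alpha)m'\le p^{*}$, so on the bounded domain the embeddings $W^{1,p}_0(\Omega)\hookrightarrow L^{p^{*}}(\Omega)\hookrightarrow L^{(1-\alpha)m'}(\Omega)$ let me bound $\|u_n\|_{L^{(1-\alpha)m'}(\Omega)}$ by $\|u_n\|_{W^{1,p}_0(\Omega)}$.

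The main obstacle is then closing the loop. Combining the displays produces the self-referential estimate $\|u_n\|_{W^{1,p}_0(\Omega)}^{p}\le\|f\|_{L^m(\Omega)}\,\|u_n\|_{W^{1,p}_0(\Omega)}^{1-\alpha}$, and because $0<\alpha<1$ makes the exponent $1-\alpha$ strictly smaller than $p$, the self-referential factor can be moved to the left and the inequality closed, which after simplification yields the asserted bound $\|u_n\|_{W^{1,p}_0(\Omega)}\le\|f\|_{L^m(\Omega)}^{1/p}$, uniform in $n$ since every embedding constant is $n$-independent. The points I would check carefully are that $m^{*}>1$ (true because $(1-\alpha)(N-p)>0$ when $N>p$), that the degenerate case $N\le p$ is covered by the unrestricted Sobolev embedding into every $L^q(\Omega)$, and that the a priori finiteness of $\int_\Omega u_n^{(1-\alpha)m'}\,dx$ needed before closing is free from $u_n\in L^\infty(\Omega)$ at each fixed $n$.
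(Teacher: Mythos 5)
Your proposal reproduces the paper's own argument essentially step for step: test Problem $(2.1)$ with $u_{n}$, bound the right-hand side by $\int_{\Omega}f\,u_{n}^{1-\alpha}\,dx$ via $f_{n}\leqslant f$ and $(u_{n}+\tfrac{1}{n})^{-\alpha}\leqslant u_{n}^{-\alpha}$, apply H\"older with exponents $m,m'$ to get $\|f\|_{L^{m}(\Omega)}\|u_{n}\|_{L^{(1-\alpha)m'}(\Omega)}^{1-\alpha}$, observe that $m\geqslant m^{*}$ gives $(1-\alpha)m'\leqslant p^{*}$ so Sobolev embedding applies, and absorb the factor $\|u_{n}\|_{W^{1,p}_{0}(\Omega)}^{1-\alpha}$, which is exactly the paper's route. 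One small caveat: closing the self-referential inequality $\|u_{n}\|_{W^{1,p}_{0}(\Omega)}^{p}\leqslant C\|f\|_{L^{m}(\Omega)}\|u_{n}\|_{W^{1,p}_{0}(\Omega)}^{1-\alpha}$ actually yields $\|u_{n}\|_{W^{1,p}_{0}(\Omega)}\leqslant C\|f\|_{L^{m}(\Omega)}^{1/(p+\alpha-1)}$, not the exponent $1/p$ you assert at the end --- but this same discrepancy exists between the paper's lemma statement and its own proof (which ends with a $\|f\|^{1/(p+\alpha-1)}$-type bound and a constant $C$), and the substantive conclusion, a bound uniform in $n$ depending only on $\|f\|_{L^{m}(\Omega)}$ and the data, is correctly established by your argument.
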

\begin{proof}
Multiplying the first identity in Problem $(2.1)$ by $u_{n}$ and
integrating over $\Omega,$ we get
\begin{equation*}
\int_{\Omega}|\nabla
u_{n}|^{p}dx+\int_{\Omega}|u_{n}|^{p}dx=\int_{\Omega}\frac{f_{n}u_{n}}
{(u_{n}+\frac{1}{n})^{\alpha}}dx\leqslant\int_{\Omega}|f_{n}|u^{1-\alpha}_{n}dx
\leqslant\|f\|_{L^{m}(\Omega)}\|u^{(1-\alpha)}_{n}\|_{L^{m'}(\Omega)},
\end{equation*}
and note that
\begin{align*}
\Big(\int_{\Omega}|u_{n}|^{p*}dx\Big)^{\frac{p}{p^{*}}}\leqslant
C\int_{\Omega}|\nabla u_{n}|^{p}dx,
\end{align*}
i.e.~$\|u_{n}\|_{L^{p^{*}}(\Omega)}\leqslant
C\|f\|^{\frac{1}{p+\alpha-1}}_{L^{1}(\Omega)}.$ Furthermore, we get
$\|u_{n}\|_{W^{1,p}_{0}(\Omega)}\leqslant
C\|f\|^{\frac{1-\alpha}{p+\alpha-1}}_{L^{m}(\Omega)}.$
\end{proof}
Next, we give the first result of this section:
\begin{theorem}
Let $f$ be a nonnegative function in $L^{m}(\Omega)(f\not\equiv0)$
with $m\geqslant m^{*}>1.$  Then Problem $(1.1)$ has a solution
$u\in W^{1,p}_{0}(\Omega)$
satisfying
\begin{equation*}
\int_{\Omega}|\nabla u|^{p-2}\nabla u\nabla\varphi
dx+\int_{\Omega}|u|^{p-2}u\varphi
dx=\int_{\Omega}\frac{f\varphi}{u^{\alpha}}dx,~~\forall \varphi\in
C^{\infty}_{0}(\Omega).
\end{equation*}
Moreover, the solution $u$ of Problem $(1.1)$ satisfies the following properties
\begin{align*}
&~(i)~\hbox{If} ~~m>\frac{N}{p}~~and~~2-\frac{2}{m}<p<N, ~then ~u\in L^{\infty}(\Omega). \\
&~(ii)~\hbox{If}~~1<m^{*}\leqslant m<\frac{N}{p}, ~then ~u\in
L^{s}(\Omega), with~1<s\leqslant\frac{(p+\alpha-1)Nm}{N-pm}.
\end{align*}
\end{theorem}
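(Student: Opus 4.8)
The plan is to follow the architecture of the proof of Theorem 2.1, working with the same approximating family $u_{n}$ that solves Problem (2.1). By Lemma 3.1, whenever $m\geqslant m^{*}$ we already have the uniform bound $\|u_{n}\|_{W^{1,p}_{0}(\Omega)}\leqslant C\|f\|^{(1-\alpha)/(p+\alpha-1)}_{L^{m}(\Omega)}$, which supplies the compactness needed throughout.

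For existence, I would first extract from this uniform bound a subsequence enjoying the three convergences of (2.4): $u_{n}\rightharpoonup u$ weakly in $W^{1,p}(\Omega)$ and strongly in $L^{p}(\Omega)$, $u_{n}\to u$ a.e. in $\Omega$, and $|\nabla u_{n}|^{p-2}\nabla u_{n}\rightharpoonup \vec{V}$ weakly in $L^{p/(p-1)}(\Omega,R^{N})$. The decisive ingredient for passing to the limit in the weak formulation (2.6) is the lower bound of Lemma 2.2, valid for every $\alpha>0$: on any $\Omega'\subset\subset\Omega$ one has $u_{n}\geqslant C_{\Omega'}>0$, so for a fixed $\varphi\in C^{\infty}_{0}(\Omega)$ with $\mathrm{supp}\,\varphi=\Omega'$ the singular integrand satisfies $0\leqslant \frac{f_{n}|\varphi|}{(u_{n}+1/n)^{\alpha}}\leqslant \frac{\|\varphi\|_{L^{\infty}}}{C_{\Omega'}^{\alpha}}f(x)$, an integrable majorant independent of $n$. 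Dominated convergence then gives $\int_{\Omega}\frac{f_{n}\varphi}{(u_{n}+1/n)^{\alpha}}dx\to\int_{\Omega}\frac{f\varphi}{u^{\alpha}}dx$, exactly as in (2.5). The identification $\vec{V}=|\nabla u|^{p-2}\nabla u$ is then obtained verbatim from the Minty--Browder argument of (2.8)--(2.10): test with $u_{n}-\xi$, invoke the monotonicity of the $p$-Laplacian as in (2.9), let $n\to\infty$ to reach (2.10), and finally put $\xi=u\pm\varepsilon\varphi$ and send $\varepsilon\to0^{+}$.

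For the regularity statement (i) I would run the Moser iteration of Theorem 2.1, but test (2.6) with $u_{n}^{\beta_{k}/m'+\alpha}$ in place of $u_{n}^{\beta_{k}/m'+1}$. Since $(u_{n}+1/n)^{-\alpha}\leqslant u_{n}^{-\alpha}$, this choice cancels the singular denominator exactly and reduces the right-hand side to $\|f\|_{L^{m}}\|u_{n}\|_{\beta_{k}}^{\beta_{k}/m'}$ as in (2.11); the price is that the gradient term now carries the weight $u_{n}^{\beta_{k}/m'+\alpha-1}$. I would then recalibrate the auxiliary sequences $\beta_{k},\beta_{k}^{*},\lambda_{k},F_{k}$ so that the geometric recursion (2.14)--(2.15) still closes with contraction ratio $\delta=p^{*}/(pm')>1$, check that $m>N/p$ and $2-2/m<p$ still force $2<\beta_{1}<p^{*}$, and conclude that $F_{k}/\beta_{k}$ stays bounded, yielding a uniform $\|u_{n}\|_{L^{\infty}}$ bound and $\|u\|_{L^{\infty}}\leqslant e^{d_{0}}$ by Fatou. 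For statement (ii), set $\delta=\frac{m(N-p)}{N-mp}>1$ and test with $u_{n}^{p(\delta-1)+1}$; bounding the singular term by $\int_{\Omega}fu_{n}^{p(\delta-1)+1-\alpha}dx$, applying H\"older with exponents $m,m'$ and the Sobolev inequality to $u_{n}^{\delta}$ produces the self-improving estimate $\|u_{n}\|_{L^{s}(\Omega)}\leqslant C\|f\|^{1/(p+\alpha-1)}_{L^{m}(\Omega)}$ for $1<s\leqslant\frac{(p+\alpha-1)Nm}{N-pm}$, which transfers to $u$ via Fatou.

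I expect the main obstacle to be the bookkeeping in part (i). Unlike the case $\alpha=1$, where the power of $u_{n}$ surviving on the right was exactly one unit below the test exponent, here the shift by $1-\alpha\in(0,1)$ perturbs both the weight on the gradient term and the closing relation between $\beta_{k+1}$ and $\beta_{k}^{*}$, so the recursion constants and the explicit solution $\beta_{k}=\frac{(N+p)m-2N}{(m-1)(mp-N)}\delta^{k-1}-\frac{Npm}{mp-N}$ must be rederived; the delicate point is verifying that the threshold hypotheses $m>N/p$ and $2-2/m<p<N$ are precisely what keep this recalibrated iteration convergent.
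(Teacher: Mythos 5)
Your overall architecture matches the paper's proof exactly: existence is obtained by rerunning the proof of Theorem 2.1 (a priori bound from Lemma 3.1, the convergences (2.4), the locally uniform lower bound of Lemma 2.2 plus dominated convergence for the singular term, and the Minty--Browder identification of $\vec{V}$), part (i) is the Moser iteration of Theorem 2.1, and part (ii) is the weighted test function $u_{n}^{p(\delta-1)+1}$ combined with H\"older and Sobolev. The paper itself compresses the existence and part (i) to ``following the lines of Theorem 2.1,'' so your extra detail there is fine.

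There is, however, one concrete error in your part (ii): you take $\delta=\frac{m(N-p)}{N-mp}$, which is the value from Theorem 2.1(ii), i.e.\ the case $\alpha=1$. With that choice the exponent matching that makes the estimate self-improving fails: after H\"older and Sobolev you need $\delta p^{*}=(p(\delta-1)+1-\alpha)\frac{m}{m-1}$, and solving this for general $\alpha$ gives $\delta=\frac{m(p+\alpha-1)(N-p)}{p(N-mp)}$, which is what the paper uses and which reduces to your value only when $\alpha=1$. With your $\delta$ the left-hand side of the Sobolev step produces the norm $\|u_{n}\|_{L^{\delta p^{*}}}$ with $\delta p^{*}=\frac{Nmp}{N-mp}$ while the right-hand side involves a different $L^{r}$ norm of $u_{n}$, so the inequality does not close and you do not reach the claimed range $s\leqslant\frac{(p+\alpha-1)Nm}{N-mp}$. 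This is exactly the recalibration you correctly anticipated for part (i) but omitted in part (ii); replacing $\delta$ by $\frac{m(p+\alpha-1)(N-p)}{p(N-mp)}$ (and checking $\delta>1$ under $m^{*}\leqslant m<\frac{N}{p}$, so that the test function is admissible) repairs the argument and brings it into line with the paper.
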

\begin{proof} $\mathbf{Part 1}$ (Existence).
Following the lines of the proof of Theorem 1.1, we obtain the
existence of the solution of Problem $ (1,1).$

$\mathbf{Part 2}$ (Regularity). (i) Using a trick of Theorem 1.1, it
is easy to see that the first conclusion holds.

 (ii).~Define $\delta=\frac{m(p+\alpha-1)(N-p)}{p(N-m p)}.$ According to the
condition $1< m^{*}\leqslant m<\frac{N}{p}$, we get $\delta>1$.
Choosing $u_{n}^{p(\delta-1)+1}$ as a test function, and applying
$\hbox{H\"{o}lder's}$ inequality,  we arrive at the following
relations
\begin{equation}
\begin{split}
(p(\delta-1)+1)\int_{\Omega}|\nabla u_{n}|^{p}u_{n}^{p(\delta-1)}dx+
\int_{\Omega}u_{n}^{p\delta}dx&=\int_{\Omega}\frac{f_{n}}{(u_{n}+\frac{1}{n})^{\alpha}}u_{n}^{p(\delta-1)+1}dx
dx\\&\leqslant\int_{\Omega}f(x)u_{n}^{p(\delta-1)+1-\alpha}dx\\
&\leqslant\Big(\int_{\Omega}u_{n}^{(p(\delta-1)+1-\alpha)\frac{m}{m-1}}dx\Big)^{\frac{m-1}{m}}\|f\|_{L^{m}(\Omega)}.
\end{split}
\end{equation}
Moreover, applying $\textrm{Sobolev}$ embedding theorem and using
Inequality $(2.11)$, we get
\begin{equation}
\begin{split}
\Big(\int_{\Omega}|u_{n}|^{\frac{\delta
Np}{N-p}}dx\Big)^{\frac{N-p}{N}}&\leqslant C\int_{\Omega}|\nabla
u_{n}^{\delta}|^{p}dx\leqslant
C\delta^{p}\int_{\Omega}u_{n}^{p(\delta-1)}|\nabla u_{n}|^{p}dx
\\&\leqslant\Big(\int_{\Omega}u_{n}^{p(\delta-1+1-\alpha)\frac{m}{m-1}}dx\Big)^{\frac{m-1}{m}}\|f\|_{L^{m}(\Omega)}.
\end{split}
\end{equation}

Using the definition of $\delta$, we get $\delta
p^{*}=(p(\delta-1)+1-\alpha)\frac{m}{m-1},$ then for all
$~1\leqslant s\leqslant\frac{Nm(p+\alpha-1)}{N-mp}$, we have
\begin{equation*}
\|u_{n}\|_{L^{s}(\Omega)}\leqslant
C\|f\|^{\frac{1}{p+\alpha-1}}_{L^{m}(\Omega)}.
\end{equation*}
\end{proof}

If $1\leqslant m<m^{*},$ it is not clear whether there exists a
solution $u\in W^{1,p}_{0}(\Omega)$ of Problem $(1.1).$ But, we may
find that Problem $(1.1)$ has a solution $u\in W^{1,q}_{0}(\Omega)$
with $q\leqslant
q_{*}\overset{\Delta}=\frac{Nm(p+\alpha-1)}{N-m(1-\alpha)}.$ Since
$1\leqslant m<m^{*},$ we get that
\begin{align*}
q_{*}-p=\frac{m[Np-(1-\alpha)(N-p)]-Np}{N-m(1-\alpha))}=\frac{Np(m-m^{*})}{m^{*}(N-m(1-\alpha))}<0\Rightarrow
W^{1,p}(\Omega)\subset W^{1,q}(\Omega),
\end{align*}
which implies that Problem $(1.1)$ has a solution in a larger space
$W^{1,q_{*}}(\Omega)$ rather than $W^{1,p}(\Omega).$

In this case, our main result is
\begin{theorem}
Suppose that $f\in L^{m}(\Omega)$ with $1\leqslant m<m^{*}$, then,
for any $\frac{Np}{N+p}<q\leqslant q_{*},$ Problem $(1.1)$ has a
weak solution $u\in W^{1,q}(\Omega)$ satisfying
\begin{equation*}
\int_{\Omega}|\nabla u|^{p-2}\nabla u\nabla\varphi
dx+\int_{\Omega}|u|^{p-2}u\varphi
dx=\int_{\Omega}\frac{f\varphi}{u^{\alpha}}dx,~~\forall \varphi\in
C^{\infty}_{0}(\Omega),
\end{equation*}
if the following assumptions hold
\begin{align*}
&(H_{1})1<p<\sqrt{N},~0<\alpha<~\frac{N-p^{2}}{N(p+1)},~and~\frac{Np}{N(p+\alpha-1)+p^{2}}<m<m^{*}~or~\\
&~~~~~~~~\sqrt{N}\leqslant
p<N,~0<\alpha<~1-\frac{1}{m},~and~1<m<m^{*}.
\end{align*}
\end{theorem}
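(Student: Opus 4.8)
The plan is to recover the solution as a limit of the approximating solutions $u_n$ of Problem $(2.1)$, which exist by Lemma 2.1 and, by Lemma 2.2, are nondecreasing in $n$ and satisfy $u_n\ge u_1\ge C_{\Omega'}>0$ on every $\Omega'\subset\subset\Omega$. The existence of a weak solution in $W^{1,q}(\Omega)$ reduces to two tasks: first, a uniform a priori bound $\|u_n\|_{W^{1,q}_0(\Omega)}\le C$ for $\frac{Np}{N+p}<q\le q_*$; second, passage to the limit in the weak formulation of Problem $(2.1)$. The second is the delicate one, because in the range $1\le m<m^*$ one cannot expect a bound in $W^{1,p}_0(\Omega)$, so the monotonicity argument used in Section 2 (which rests on the $W^{1,p}$ estimate) is not directly available.

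For the a priori estimate I would combine an energy estimate obtained from a power test function with H\"older's inequality, using both a power bound on $u_n$ and the negative-power integrability $\int_\Omega u_1^{-r}\,dx<\infty$ for $r<1$. Concretely, testing the equation for $u_n$ with $u_n^{\lambda}$ (or $(u_n+\tfrac1n)^\lambda$ to stay clear of the singular set) and using $0<\alpha<1$ gives
\[
\int_\Omega |\nabla u_n|^{p}\,(u_n+\tfrac1n)^{\lambda-1}\,dx\;\le\;\|f\|_{L^m(\Omega)}\,\big\|(u_n+\tfrac1n)^{\lambda-\alpha}\big\|_{L^{m'}(\Omega)};
\]
rewriting the left member as $\big|\nabla u_n^{(\lambda+p-1)/p}\big|^p$ and using $W^{1,p}_0(\Omega)\hookrightarrow L^{p^*}(\Omega)$, a balance of the two exponents of $u_n$ yields a uniform bound $\|u_n\|_{L^{\sigma}(\Omega)}\le C$ with $\sigma=\frac{Nq_*}{N-q_*}$. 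Then, for $q<p$, I would split
\[
\int_\Omega |\nabla u_n|^{q}\,dx\;\le\;\Big(\int_\Omega |\nabla u_n|^{p}\,(u_n+\tfrac1n)^{\lambda-1}\,dx\Big)^{\!q/p}\Big(\int_\Omega (u_n+\tfrac1n)^{(1-\lambda)\frac{q}{p-q}}\,dx\Big)^{\!(p-q)/p},
\]
where the first factor is controlled by the preceding step, while the second is estimated either by the $L^\sigma$ bound or, when its exponent is negative, by $\int_\Omega u_1^{-r}\,dx<\infty$ together with $u_n\ge u_1$. This is exactly where the structural hypothesis $(H_1)$ enters: in each of its two regimes it guarantees that the exponents $\lambda,\sigma$ and the negative power one must integrate are admissible (in particular that this power is $<1$, so the integrability of $u_1^{-r}$ applies), that $q_*>\frac{Np}{N+p}$ so the stated interval of $q$ is nonempty, and that $q^*>p$, which makes $|u|^{p-2}u$ and the flux integrable against test functions. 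Hence $\{u_n\}$ is bounded in $W^{1,q}_0(\Omega)$, and up to a subsequence $u_n\rightharpoonup u$ in $W^{1,q}_0(\Omega)$ and $u_n\to u$ a.e.\ in $\Omega$.

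The main obstacle is identifying $\lim_n |\nabla u_n|^{p-2}\nabla u_n$ with $|\nabla u|^{p-2}\nabla u$ without a $W^{1,p}$ bound. To do this I would prove the almost everywhere convergence $\nabla u_n\to\nabla u$ in $\Omega$ by the truncation method: testing the equations with $T_k(u_n-u)$, where $T_k(s)=\max\{-k,\min\{s,k\}\}$, and exploiting the monotonicity of the $p$-Laplacian together with the lower-order term $|u_n|^{p-2}u_n$, one shows that $\nabla u_n\to\nabla u$ a.e., whence $|\nabla u_n|^{p-2}\nabla u_n\to|\nabla u|^{p-2}\nabla u$ a.e.; since this sequence is bounded in $L^{q/(p-1)}(\Omega)$, it converges weakly to the same limit. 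For the singular right-hand side I would argue as for $(2.5)$: for fixed $\varphi\in C^\infty_0(\Omega)$ with $\mathrm{supp}\,\varphi=\Omega'$, the uniform bound $0\le \frac{f_n\varphi}{(u_n+1/n)^{\alpha}}\le \frac{\|\varphi\|_{L^\infty}}{C_{\Omega'}^{\alpha}}\,f$ from Lemma 2.2 and the dominated convergence theorem give $\int_\Omega \frac{f_n\varphi}{(u_n+1/n)^{\alpha}}\,dx\to\int_\Omega\frac{f\varphi}{u^{\alpha}}\,dx$. Passing to the limit in the weak formulation of Problem $(2.1)$ then yields the claimed weak identity with $u\in W^{1,q}(\Omega)$ for every $\frac{Np}{N+p}<q\le q_*$, which completes the proof.
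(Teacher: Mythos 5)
Your overall strategy coincides with the paper's: the uniform $W^{1,q}_{0}$ bound is obtained essentially as in Lemma 3.2 (testing with $(u_{n}+\theta)^{p(\delta-1)+1}-\theta^{p(\delta-1)+1}$, Sobolev embedding, then a H\"older splitting of $\int_{\Omega}|\nabla u_{n}|^{q}dx$), the negative-power integrability of $u_{1}$ from Lemma 3.3 controls the singular term, the right-hand side passes to the limit by dominated convergence on the support of $\varphi$, and the gradient limit is identified through monotonicity and truncations. So the architecture is the same as the paper's.

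The one place where your argument has a genuine gap is the choice of test function in the gradient-convergence step. You test with $T_{k}(u_{n}-u)$, where $u$ is the weak limit. But in the regime $1\leqslant m<m^{*}$ the limit $u$ is only known to lie in $W^{1,q}_{0}(\Omega)$ with $q\leqslant q_{*}<p$, so the cross term $\int_{\Omega}|\nabla u_{n}|^{p-2}\nabla u_{n}\cdot\nabla u\,\chi_{\{|u_{n}-u|<k\}}dx$ need not be finite: $|\nabla u_{n}|^{p-1}\in L^{p/(p-1)}(\Omega)$ while $\nabla u\in L^{q}(\Omega)$ only, and H\"older would require $q\geqslant p$. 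Hence $T_{k}(u_{n}-u)$ is not an admissible test function as written. The paper avoids this by testing the difference of the equations for two approximants with $T_{\varepsilon}(u_{n}-u_{k})$ --- both $u_{n}$ and $u_{k}$ belong to $W^{1,p}_{0}(\Omega)\cap L^{\infty}(\Omega)$, so every integral makes sense --- and then shows that $\{\nabla u_{n}\}$ is a Cauchy sequence in $L^{1}(\Omega)$ by splitting $\Omega$ into $E_{n,k,\varepsilon}=\{|u_{n}-u_{k}|\leqslant\varepsilon\}$ and its complement, using the measure estimate furnished by the strong $L^{p}$ convergence of $u_{n}$. Your argument can be repaired either by adopting this Cauchy-sequence device or by replacing $u$ with a truncation $T_{h}(u)$ (which does lie in $W^{1,p}_{0}(\Omega)$ thanks to the weighted gradient estimate), but some such modification is required. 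Note also that the bound on the difference of the singular terms, $\bigl|\int_{\Omega}\bigl(\frac{f_{n}}{(u_{n}+1/n)^{\alpha}}-\frac{f_{k}}{(u_{k}+1/k)^{\alpha}}\bigr)T_{\varepsilon}(u_{n}-u_{k})dx\bigr|\leqslant 2\varepsilon\|f\|_{L^{m}}\|u_{1}^{-\alpha}\|_{L^{m'}}$, is exactly where the hypothesis $\alpha<1-\frac{1}{m}$ (so that $\alpha m'<1$ and Lemma 3.3 applies) is consumed; your dominated-convergence argument for a fixed compactly supported $\varphi$ handles the right-hand side of the weak formulation but not this global term, so Lemma 3.3 must be invoked in the gradient step as well.
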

In order to prove Theorem 3.2, we need the following lemmas
\begin{lemma}
The solution $u_{n}$ of Problem $(2.1)$ satisfies the estimates
\begin{equation}
\|u_{n}\|_{W^{1,q}(\Omega)}\leqslant
C:=C(\|f\|_{L^{m}},p,N,\alpha,|\Omega|),~1<q\leqslant q_{*}.
\end{equation}
\end{lemma}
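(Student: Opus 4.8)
The plan is to prove the estimate only for the borderline exponent $q=q_*$, since for $1<q\le q_*$ the bound then follows from H\"older's inequality on the bounded domain, namely $\|u_n\|_{W^{1,q}(\Omega)}\le |\Omega|^{\frac1q-\frac1{q_*}}\|u_n\|_{W^{1,q_*}(\Omega)}$. Recall that the $W^{1,p}_0$ bound of Lemma 3.1 was obtained by testing with $u_n$ and closing the loop through the embedding $W^{1,p}_0\hookrightarrow L^{(1-\alpha)m'}$; this breaks down precisely when $m<m^*$, because then $(1-\alpha)m'>p^*$. To recover a gradient estimate in the larger range I would combine two ingredients: a uniform \emph{weighted} gradient estimate and a H\"older splitting of $\int_\Omega|\nabla u_n|^{q_*}\,dx$.

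For the weighted estimate I would test $(2.1)$ with $g(u_n)$, where $g'(t)=(t+\tfrac1n)^{-\gamma}$ and $0<\gamma<1-\alpha$ is to be fixed; explicitly $g(t)=\frac{1}{1-\gamma}[(t+\tfrac1n)^{1-\gamma}-(\tfrac1n)^{1-\gamma}]$, which is nonnegative, increasing, and vanishes on $\partial\Omega$, hence admissible. Dropping the nonnegative zero-order term and using $g(u_n)\le\frac1{1-\gamma}(u_n+\tfrac1n)^{1-\gamma}$ gives
\begin{equation*}
\int_\Omega\frac{|\nabla u_n|^p}{(u_n+\frac1n)^{\gamma}}\,dx\le\frac{1}{1-\gamma}\int_\Omega f_n\,(u_n+\tfrac1n)^{1-\gamma-\alpha}\,dx\le C\|f\|_{L^m(\Omega)}\Big(\int_\Omega(u_n+\tfrac1n)^{(1-\gamma-\alpha)m'}\,dx\Big)^{1/m'}.
\end{equation*}
The constraint $\gamma<1-\alpha$ is essential here: it keeps the exponent $1-\gamma-\alpha$ strictly positive, so that the factor $(u_n+\tfrac1n)^{1-\gamma-\alpha}$ stays controlled by a power of $u_n$ (with no $n^{\theta}$ blow-up) and the H\"older step against $f\in L^m$ is legitimate uniformly in $n$.

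Next I would write, by H\"older with exponents $p/q_*$ and $p/(p-q_*)$,
\begin{equation*}
\int_\Omega|\nabla u_n|^{q_*}\,dx\le\Big(\int_\Omega\frac{|\nabla u_n|^p}{(u_n+\frac1n)^{\gamma}}\,dx\Big)^{q_*/p}\Big(\int_\Omega(u_n+\tfrac1n)^{\frac{\gamma q_*}{p-q_*}}\,dx\Big)^{(p-q_*)/p}.
\end{equation*}
The key algebraic step is to fix $\gamma$ so that the two remaining powers of $u_n$ both coincide with the Sobolev exponent $q_*^{*}:=\frac{Nq_*}{N-q_*}$, i.e. to impose $\frac{\gamma q_*}{p-q_*}=q_*^{*}$ together with $(1-\gamma-\alpha)m'=q_*^{*}$. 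Eliminating $\gamma$ between these two identities is exactly what forces $q=q_*=\frac{Nm(p+\alpha-1)}{N-m(1-\alpha)}$ and yields the admissible choice $\gamma=\frac{N(p-q_*)}{N-q_*}\in(0,1-\alpha)$, where positivity and the upper bound follow from $q_*<p<N$ and from $q_*^{*}/m'>0$. Applying the Sobolev inequality $\|u_n\|_{L^{q_*^{*}}(\Omega)}\le C\|\nabla u_n\|_{L^{q_*}(\Omega)}$ in both factors then collapses the two displays into a single self-improving inequality of the form $\|\nabla u_n\|_{L^{q_*}}^{q_*}\le C(\|\nabla u_n\|_{L^{q_*}}^{E}+1)$, with $E=\frac{q_*^{*}}{p}\big(\frac{q_*}{m'}+p-q_*\big)$, and one verifies that $E<q_*$, so the bound on $\|\nabla u_n\|_{L^{q_*}}$ closes with a constant independent of $n$.

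The routine parts are the admissibility of $g(u_n)$ and the Sobolev/H\"older manipulations; the genuinely delicate point, and the one I would check most carefully, is the exponent bookkeeping — both the compatibility of the two matching conditions (which is what pins down $q_*$) and the final strict inequality $E<q_*$ that actually produces a finite bound rather than a vacuous one. The hypotheses $(H_1)$ of Theorem 3.2 are not needed for the a priori bound itself beyond ensuring that $q_*<N$ so that $q_*^{*}$ is defined; they serve instead to place $q_*$ in the range $\frac{Np}{N+p}<q_*<N$ in which the limit solution is subsequently constructed.
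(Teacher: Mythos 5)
Your argument is correct and is essentially the paper's own proof of Lemma 3.2: the paper tests $(2.1)$ with $(u_{n}+\theta)^{p(\delta-1)+1}-\theta^{p(\delta-1)+1}$, which is exactly your $g(u_{n})$ with $\gamma=p(1-\delta)=\frac{N(p-q_{*})}{N-q_{*}}$, obtains the same weighted gradient bound on $\int_{\Omega}|\nabla u_{n}|^{p}(u_{n}+\theta)^{p(\delta-1)}\,dx$, and concludes with the same H\"older splitting of $\int_{\Omega}|\nabla u_{n}|^{q}\,dx$ into the weighted gradient integral and a pure power of $u_{n}$. The only (harmless) difference is where the bootstrap closes: the paper applies the Sobolev inequality to $(u_{n}+\theta)^{\delta}\in W^{1,p}_{0}(\Omega)$ to obtain first the standalone bound $\int_{\Omega}u_{n}^{\delta p^{*}}\,dx\leqslant C$ (and $\delta p^{*}$ coincides with your $q_{*}^{*}$), whereas you apply it to $u_{n}\in W^{1,q_{*}}_{0}(\Omega)$ and close a self-improving inequality in $\|\nabla u_{n}\|_{L^{q_{*}}}$ --- both closures reduce to the same condition $m<N/p$, which holds because $m<m^{*}<N/p$.
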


\begin{proof}  Noting that $1\leqslant
m<m^{*}\Rightarrow
\frac{p+\alpha-1}{p}<\delta=\frac{m(p+\alpha-1)(N-p)}{p(N-m p)}<1,$
we can't choose $u_{n}^{p(\delta-1)+1}$ as a test-function because
the gradient of the function $u_{n}^{p(\delta-1)+1}$ is singular at $u_{n}=0.$  In
order to overcome this difficulty, for any $0<\theta<\frac{1}{n}$,
we may replace $u_{n}^{p(\delta-1)+1}$ by
$(u_{n}+\theta)^{p(\theta-1)+1}-\theta^{p(\delta-1)+1}$ in $(2.1),$
and we get
\begin{equation}
\begin{split}
(p\delta-p+1)\int_{\Omega}|\nabla
u_{n}|^{p}(u_{n}+\theta)^{p(\delta-1)}dx&+
\int_{\Omega}u_{n}^{p-1}[(u_{n}+\theta)^{p(\delta-1)+1}-\theta^{p(\delta-1)+1}]dx\\&=
\int_{\Omega}\frac{f_{n}}{(u_{n}+\frac{1}{n})^{\alpha}}[(u_{n}+\theta)^{p(\delta-1)+1}-\theta^{p(\delta-1)+1}]dx.
\end{split}
\end{equation}
Then, applying $\textrm{H\"{o}lder}'s$ inequality, we have
\begin{align*}
\frac{(p\delta-p+1)}{\delta^{p}}\int_{\Omega}|\nabla[(u_{n}+\theta)^{\delta}-\theta^{\delta}]|^{p}dx&+
\int_{\Omega}u_{n}^{p-1}(u_{n}+\theta)^{p(\delta-1)+1}-\theta^{p(\delta-1)+1}dx\\
&=\int_{\Omega}\frac{f_{n}}{(u_{n}+\frac{1}{n})^{\alpha}}[(u_{n}+\theta)^{p(\delta-1)+1}-\theta^{p(\delta-1)+1}]dx\\
&\leqslant\int_{\Omega}f(x)(u_{n}+\theta)^{p(\delta-1)+1}dx\\
&\leqslant\|f\|_{L^{m}(\Omega)}\Big(\int_{\Omega}(u_{n}+\theta)^{[p(\delta-1)+1-\alpha]\frac{m}{m-1}}dx\Big)^{\frac{m-1}{m}}.
\end{align*}
 Combining the above inequalities with $\textrm{Sobolev}$ embedding theorem
$W^{1,p}_{0}(\Omega)\hookrightarrow L^{p^{*}}(\Omega)$, we have
\begin{align}
\Big(\int_{\Omega}[(u_{n}+\theta)^{\delta}-\theta^{\delta}]^{p^{*}}dx\Big)^{\frac{p}{p*}}\leqslant
\|f\|_{L^{m}(\Omega)}\Big(\int_{\Omega}(u_{n}+\theta)^{[p(\delta-1)+1-\alpha]\frac{m}{m-1}}dx\Big)^{\frac{m-1}{m}(\Omega)}.
\end{align}
Letting $\theta\rightarrow0^{+}$ in $(3.5)$ and using the definition
of $\delta$, we obtain the following relations
\begin{align*}
\int_{\Omega}|u_{n}|^{\delta p^{*}}dx\leqslant
\|f\|^{\frac{mp^{*}}{pm-p^{*}(m-1)}}_{L^{m}(\Omega)},
\end{align*}
which implies
\begin{align}
\int_{\Omega}|u_{n}+\theta|^{\delta p^{*}}dx\leqslant
C(\|f\|^{\frac{mp^{*}}{pm-p^{*}(m-1)}}_{L^{m}(\Omega)}+\theta^{p^{*}}|\Omega|).
\end{align}
$(3.4)$ and $(3.6)$ yield that
\begin{align*}
\int_{\Omega}|\nabla
u_{n}|^{p}(u_{n}+\theta)^{p(\delta-1)}dx\leqslant
C(\|f\|^{\frac{mp}{pm-p^{*}(m-1)}}_{L^{m}(\Omega)}+\|f\|_{L^{m}(\Omega)}\theta^{\frac{(m-1)p^{*}}{m}}|\Omega|^{\frac{m-1}{m}}).
\end{align*}
Then for $1<q\leqslant q_{*}$, we have
\begin{equation*}
\begin{split}
\int_{\Omega}|\nabla u_{n}|^{q}dx&=\int_{\Omega}|\nabla
u_{n}|^{q}(u_{n}+\theta)^{(\delta-1)q}(u_{n}+\theta)^{(1-\delta)q}dx\\
&\leqslant \Big(\int_{\Omega}|\nabla
u_{n}|^{p}(u_{n}+\theta)^{(\delta-1)p}dx\Big)^{\frac{q}{p}}
\Big(\int_{\Omega}(u_{n}+\theta)^{\frac{(1-\delta)pq}{p-q}}dx\Big)^{1-\frac{q}{p}}\\
&\leqslant C.
\end{split}
\end{equation*}
\end{proof}

The following lemma plays a role in proving that Problem $(1.1)$ has
a nonnegative solution $u\in W^{1,q}_{0}(\Omega).$
\begin{lemma}
 The solution $u_{1}$ to Problem $(2.1)$ with $n=1$ satisfies
\begin{equation}
\int_{\Omega}u^{-r}_{1}dx<\infty,~ \forall~r<1.
\end{equation}
\end{lemma}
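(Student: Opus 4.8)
The plan is to reduce the claim to the single pointwise lower bound $u_1(x)\ge c\,\mathrm{dist}(x,\partial\Omega)$ on $\Omega$ for some $c>0$, and then to invoke the elementary fact that, on a bounded domain with smooth boundary, $\mathrm{dist}(\cdot,\partial\Omega)^{-r}$ is integrable precisely when $r<1$. Write $\mathcal{L}v:=-\mathrm{div}(|\nabla v|^{p-2}\nabla v)+|v|^{p-2}v$ and $d(x):=\mathrm{dist}(x,\partial\Omega)$. By Lemma 2.1 the solution $u_1$ is bounded, say $\|u_1\|_{L^\infty}\le M$, and continuous; moreover $f_1=\min\{f,1\}\le1$ forces $0\le f_1/(u_1+1)^{\alpha}\le1$, so $u_1$ is in particular a bounded continuous weak supersolution of $\mathcal{L}v=0$, i.e. $\mathcal{L}u_1\ge0$. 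From Lemma 2.2 we already know $u_1>0$ in $\Omega$ and $u_1\ge C_{\Omega'}>0$ on every compact $\Omega'\subset\subset\Omega$, so the only obstruction to the finiteness of $\int_\Omega u_1^{-r}$ is the decay of $u_1$ near $\partial\Omega$; the entire work is to control this decay from below.

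For the boundary decay I would use a barrier (Hopf-type) argument at each boundary point. Fix $x_0\in\partial\Omega$; by the interior sphere condition choose $y$ with $\overline{B_R(y)}\subset\overline\Omega$ and $\overline{B_R(y)}\cap\partial\Omega=\{x_0\}$, and work on the annulus $A=B_R(y)\setminus\overline{B_{R/2}(y)}$. As radial profile I take $\psi(r)=e^{-kr}-e^{-kR}$ and set $\phi=\varepsilon\,\psi(|x-y|)$. Using the radial expression $-\mathrm{div}(|\nabla\phi|^{p-2}\nabla\phi)=-\tfrac{1}{r^{N-1}}\tfrac{d}{dr}\big(r^{N-1}|\psi'|^{p-2}\psi'\big)$ together with the $(p-1)$-homogeneity of the operator, a direct computation shows that for $k$ large enough (depending only on $N,p,R$) one has $\mathcal{L}\phi\le0$ on $A$, so $\phi$ is a subsolution there. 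Choosing $\varepsilon$ small guarantees $\phi\le u_1$ on the inner sphere $\partial B_{R/2}(y)$, where $u_1\ge\min_{\partial B_{R/2}(y)}u_1>0$ by Lemma 2.2, while $\phi=0\le u_1$ on the outer sphere $\partial B_R(y)$. The weak comparison principle for the strictly monotone operator $\mathcal{L}$ then yields $u_1\ge\phi$ in $A$. Since $\phi$ vanishes on $\partial B_R(y)$ with a strictly negative radial derivative at $x_0$, this gives $u_1(x)\ge c_{x_0}\,d(x)$ in a neighborhood of $x_0$.

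Finally I would globalize. By compactness of $\partial\Omega$ finitely many such neighborhoods cover a boundary strip $\{0<d<\delta\}$, and a partition of unity (equivalently, taking the minimum of the finitely many constants) produces a single $c>0$ with $u_1\ge c\,d$ on $\{0<d<\delta\}$. On the complementary compact set $\{d\ge\delta\}$, Lemma 2.2 gives $u_1\ge C>0\ge c\,d$ after shrinking $c$, so $u_1\ge c\,d$ throughout $\Omega$. Hence
\[
\int_\Omega u_1^{-r}\,dx\ \le\ c^{-r}\int_\Omega d(x)^{-r}\,dx\ <\ \infty\qquad(r<1),
\]
which is the assertion.

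The main obstacle is the barrier construction, and specifically getting the sign right in the presence of the zeroth-order term $|v|^{p-2}v$: one needs $\phi$ to be $p$-subharmonic strongly enough that $-\mathrm{div}(|\nabla\phi|^{p-2}\nabla\phi)$ dominates $\phi^{p-1}$, which is exactly what choosing $k$ large achieves. Here it is essential that $f_1$ is allowed to vanish near $\partial\Omega$, so the lower bound cannot be read off from the right-hand side and must instead be produced by the geometry through the barrier. By comparison, the integrability $\int_\Omega d^{-r}<\infty$ for $r<1$ is routine, for instance via the coarea formula in the boundary strip.
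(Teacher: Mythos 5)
Your argument is in essence the paper's: both proofs reduce the claim to the pointwise lower bound $u_{1}(x)\geqslant c\,\mathrm{dist}(x,\partial\Omega)$ and then integrate $\mathrm{dist}(\cdot,\partial\Omega)^{-r}$ over a boundary strip. The paper obtains this bound by citation: since $f_{1}/(u_{1}+1)^{\alpha}\leqslant 1$, the $C^{1,\beta}(\overline{\Omega})$ regularity of \cite{ADRD} applies, V\'azquez's Hopf lemma \cite{JLV} gives a strictly positive inward normal derivative on the (compact) boundary, and the integration step is the lemma of \cite{ACLPJM}. You instead reconstruct the Hopf lemma by hand with the exponential barrier $\psi(r)=e^{-kr}-e^{-kR}$ on an annulus; that computation is correct (for $k$ large the term $-\mathrm{div}(|\nabla\phi|^{p-2}\nabla\phi)$ dominates $\phi^{p-1}$, and the weak comparison principle is legitimate because $s\mapsto|s|^{p-2}s$ is increasing), and it has the merit of not needing boundary $C^{1}$ regularity. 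The one step you should state more carefully is the globalization. The barrier in the ball $B_{R}(y)$ tangent at $x_{0}$ yields $u_{1}(x)\geqslant c\,(R-|x-y|)=c\,\mathrm{dist}(x,\partial B_{R}(y))$, and since $B_{R}(y)\subset\Omega$ one has $\mathrm{dist}(x,\partial B_{R}(y))\leqslant \mathrm{dist}(x,\partial\Omega)$; so this single estimate does \emph{not} give $u_{1}\geqslant c_{x_{0}}\,\mathrm{dist}(x,\partial\Omega)$ for every $x$ near $x_{0}$ (points close to $x_{0}$ but off the inward normal can satisfy $R-|x-y|\ll \mathrm{dist}(x,\partial\Omega)$), and a finite cover plus a minimum of constants does not repair this. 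The standard fix is to use the uniform interior ball radius of the smooth boundary and, for each $x$ in the strip $\{0<\mathrm{dist}(x,\partial\Omega)<\delta\}$, run the comparison in the ball tangent at the nearest boundary point to $x$, for which $R-|x-y|=\mathrm{dist}(x,\partial\Omega)$ exactly and all constants are uniform; with that modification your proof is complete and equivalent to the paper's.
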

\begin{proof}
By $\frac{\min\{f(x),1\}}{(u_{1}+1)^{\alpha}}\leqslant1,$ and Lemma
2.2 in \cite{ADRD}, we know that there exists a $0<\beta<1$ such
that $u_{1}\in C^{1,\beta}(\overline{\Omega})$ and
$\|u_{1}\|_{C^{1,\beta}(\overline{\Omega})}\leqslant C,$ which implies that the
gradient of $u_{1}$ exists everywhere, then $\hbox{Hopf}$ Lemma in
\cite{JLV} shows that $\frac{\partial u_{1}(x)}{\partial \nu}>0,~in
~\overline{\Omega},$ where $\nu$ is the outward unit normal vector
of $\partial\Omega$ at $x.$ Moreover, following the lines of the
proof of Lemma in \cite{ACLPJM}, we get
\begin{align*}
\int_{\Omega}u_{1}^{r}dx<\infty,~\hbox{if and only if}~r>-1.
\end{align*}
\end{proof}

\textbf{Proof of Theorem 3.2.} According to Lemma 3.2, we know that
there exists a $u\in W^{1,q}_{0}(\Omega)$ such that
\begin{align}
\begin{cases}
&\displaystyle u_{n}\rightharpoonup u~~weakly~in~W^{1,q}_{0}(\Omega),\\
&\displaystyle u_{n}\rightarrow u~~strongly~in~L^{p}(\Omega),\\
&\displaystyle u_{n}\rightarrow u~~a.e.~in~\Omega.
\end{cases}
\end{align}
But the above convergence does not permit to pass to limit in the
following identity
\begin{equation}
\int_{\Omega}|\nabla u_{n}|^{p-2}\nabla u_{n}\nabla\varphi
dx+\int_{\Omega}|u_{n}|^{p-2}u_{n}\varphi
dx=\int_{\Omega}\frac{f_{n}\varphi}{(u_{n}+\frac{1}{n})^{\alpha}}dx,~for~\varphi\in
C^{\infty}_{0}(\Omega).
\end{equation}
We need to prove that $\nabla u_{n}$ converges to $\nabla u$ a.e. in
$\Omega$ or $\{\nabla u_{n}\}$ is a Cauchy sequence in measure or in
$L^{1}(\Omega).$

Case 1: $\sqrt{N}\leqslant
p<N,~0<\alpha<~1-\frac{1}{m},~and~1<m<m^{*}.$ Let $u_{n}$ and
$u_{k}$ be the solution Problem $(2.1)$ with $f_{n}$ and $f_{k}$,
respectively. Define
\begin{align*}
E_{n,k,\varepsilon}=\{x\in\Omega:|u_{n}-u_{k}|\leqslant\varepsilon\},
~T_{\varepsilon}(\tau)=\max\{\min\{\tau,\varepsilon\},-\varepsilon\},
\end{align*} then, we choose
$\varphi(x)=T_{\varepsilon}(u_{n}-u_{k})$ as a test function in
$(3.9)$ to get
\begin{align*}
\int_{\Omega}(|\nabla u_{n}|^{p-2}\nabla u_{n}-|\nabla
u_{k}|^{p-2}\nabla u_{k})\nabla T_{\varepsilon}(u_{n}-u_{k})dx&+
\int_{\Omega}(|u_{n}|^{p-2}u_{n}-|u_{k}|^{p-2}u_{k})T_{\varepsilon}(u_{n}-u_{k})dx\\
&=\int_{\Omega}(\frac{f_{n}}{(u^{n}+\frac{1}{n})^{\alpha}}-\frac{f_{k}}{(u^{k}+\frac{1}{k})^{\alpha}})
T_{\varepsilon}(u_{n}-u_{k})dx.
\end{align*}
Furthermore, we obtain
\begin{equation}
\begin{split}
\int_{E_{n,k,\varepsilon}}|\nabla
T_{\varepsilon}(u_{n}-u_{k})|^{p}dx=&
-\int_{\Omega}(|u_{n}|^{p-2}u_{n}-|u_{k}|^{p-2}u_{k})T_{\varepsilon}(u_{n}-u_{k})dx\\
&+\int_{\Omega}(\frac{f_{n}}{(u^{n}+\frac{1}{n})^{\alpha}}-\frac{f_{k}}{(u^{k}+\frac{1}{k})^{\alpha}})
T_{\varepsilon}(u_{n}-u_{k})dx:=I_{1}+I_{2}.
\end{split}
\end{equation}
Noting that $u_{n}$ is increasing with respect to $n$ and applying
the $\textrm{Sobolev}$ embedding theorem
$W^{1,q}(\Omega)\hookrightarrow L^{p}(\Omega)$, Lemma 3.3 and the
condition $0<\alpha<1-\frac{1}{m}$, we obtain
\begin{align}
&|I_{1}|\leqslant\varepsilon\int_{\Omega}(|u_{n}|^{p-1}+|u_{n}|^{p-1})dx\leqslant
C\varepsilon;\\
&|I_{2}|\leqslant\varepsilon\int_{\Omega}(\frac{f(x)}{u^{\alpha}_{1}}+\frac{f(x)}{u^{\alpha}_{1}})dx
\leqslant2\varepsilon\|f\|_{L^{m}}\|u_{1}^{-\alpha}\|_{L^{\frac{m}{m-1}}}\leqslant
C\varepsilon.
\end{align}
By $(3.9-3.12)$, we get
\begin{equation}
\int_{E_{n,k,\varepsilon}}|\nabla
T_{\varepsilon}(u_{n}-u_{k})|^{p}dx \leqslant C\varepsilon.
\end{equation}
Since $u_{n}\rightarrow u~strongly ~in ~L^{p}(\Omega),$ we get
\begin{equation}
measure\{x\in\Omega:~|u_{n}-u_{k}|>\varepsilon\}<\varepsilon.
\end{equation}
Combining $(3.12)-(3.14)$ with Lemma 3.2, we get
\begin{equation}
\begin{split}
\int_{\Omega}|\nabla
(u_{n}-u_{k})|dx&=\int_{E_{n,k,\varepsilon}}|\nabla
(u_{n}-u_{k})|+\int_{\Omega /E_{n,k,\varepsilon}}|\nabla
(u_{n}-u_{k})|\\
&\leqslant C\varepsilon^{\frac{1}{p}}+C(measure\{x\in\Omega:~|u_{n}-u_{k}|>\varepsilon\})^{1-\frac{1}{q}}\\
&\leqslant C\varepsilon^{\frac{1}{p}}+\leqslant
C\varepsilon^{1-\frac{1}{q}},
\end{split}
\end{equation}
which implies that $\{\nabla u_{n}\}$ is a \textrm{Cauchy} sequence
in $L^{1}(\Omega).$

By $(3.8),(3.9)~\hbox{and}~(3.15)$, we know that Problem $(1.1)$ has
a nonnegative solution $u\in W^{1,q}_{0}(\Omega).$

Case 2: $1<p<\sqrt{N},~0<\alpha<~\frac{N-p^{2}}{N(p+1)},~and~\frac{Np}{N(p+\alpha-1)+p^{2}}<m<m^{*}.$ Similarly as Case 1, we may prove the conclusion holds when
 $p,\alpha,N,m$ satisfy the second case.
\section{The case when $(\alpha>1)$}
In this section, we discuss how the value of $\alpha>1$ and the summability of $f$ affect the existence and regularities of solutions. First, we study the case when $f(x)\in L^{1}(\Omega).$ Our main results are as follows
\begin{lemma}
Let $u_{n}$ be the solution of Problem $(2.1)$ and suppose that
$f\in L^{1}(\Omega)$. Then
$\|u^{\frac{p+\alpha-1}{p}}_{n}\|_{W^{1,p}_{0}(\Omega)}\leqslant
\|f\|^{\frac{1}{p}}_{L^{1}(\Omega)}$ and
$\|u_{n}\|_{W^{1,p}_{loc}(\Omega)}\leqslant C\|f\|_{L^{1}(\Omega)}.$
\end{lemma}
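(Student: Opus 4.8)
The plan is to obtain both estimates from a single well-chosen test function, exploiting that $u_{n}\in W^{1,p}_{0}(\Omega)\cap L^{\infty}(\Omega)$ and $u_{n}\ge0$ by Lemma 2.1. Testing $(2.1)$ with $u_{n}$ itself, as in Theorem 2.1, is useless here: for $\alpha>1$ the weight $\frac{u_{n}}{(u_{n}+1/n)^{\alpha}}$ is not bounded uniformly in $n$ (it peaks at $u_{n}\sim\frac{1}{n(\alpha-1)}$ with a value growing like $n^{\alpha-1}$), so the right-hand side cannot be controlled by $\|f\|_{L^{1}}$. The remedy is to test with a power of $u_{n}$ tuned to the singularity.

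\emph{First estimate.} I would choose $\varphi=u_{n}^{\alpha}$ as test function; this is admissible since $\alpha>1$ and $u_{n}\in W^{1,p}_{0}\cap L^{\infty}$, so $u_{n}^{\alpha}\in W^{1,p}_{0}$ with $\nabla u_{n}^{\alpha}=\alpha u_{n}^{\alpha-1}\nabla u_{n}$. The gradient term becomes $\alpha\int_{\Omega}u_{n}^{\alpha-1}|\nabla u_{n}|^{p}\,dx$, which by the chain rule equals $\frac{\alpha}{\gamma^{p}}\int_{\Omega}|\nabla u_{n}^{\gamma}|^{p}\,dx$ with $\gamma=\frac{p+\alpha-1}{p}$, since $(\gamma-1)p=\alpha-1$. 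The zeroth-order term is $\int_{\Omega}u_{n}^{p-1}\cdot u_{n}^{\alpha}\,dx=\int_{\Omega}(u_{n}^{\gamma})^{p}\,dx$, exactly the $L^{p}$ part of $\|u_{n}^{\gamma}\|_{W^{1,p}_{0}}$. On the right, because $u_{n}\ge0$ one has $\frac{u_{n}^{\alpha}}{(u_{n}+1/n)^{\alpha}}\le1$ pointwise, so $\int_{\Omega}\frac{f_{n}}{(u_{n}+1/n)^{\alpha}}u_{n}^{\alpha}\,dx\le\int_{\Omega}f_{n}\,dx\le\|f\|_{L^{1}(\Omega)}$. Collecting terms yields $\|u_{n}^{\gamma}\|_{W^{1,p}_{0}(\Omega)}^{p}\le C\|f\|_{L^{1}(\Omega)}$ and, separately, the two useful consequences $\int_{\Omega}u_{n}^{\alpha-1}|\nabla u_{n}|^{p}\,dx\le\frac{1}{\alpha}\|f\|_{L^{1}}$ and $\int_{\Omega}u_{n}^{p+\alpha-1}\,dx\le\|f\|_{L^{1}}$.

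\emph{Second (local) estimate.} Fix $\Omega'\subset\subset\Omega$. The global $L^{p}$ control of $u_{n}$ is immediate from $\int_{\Omega}u_{n}^{p+\alpha-1}\,dx\le\|f\|_{L^{1}}$ and H\"{o}lder's inequality on the bounded domain, giving $\int_{\Omega'}u_{n}^{p}\,dx\le C\|f\|_{L^{1}}$. For the gradient, the weight $u_{n}^{\alpha-1}$ degenerates where $u_{n}\to0$, i.e. near $\partial\Omega$, which is precisely why one cannot hope for a global $W^{1,p}$ bound when $\alpha>1$; but on $\Omega'$ Lemma 2.2 provides $u_{n}\ge C_{\Omega'}>0$ uniformly in $n$, so $u_{n}^{\alpha-1}\ge C_{\Omega'}^{\alpha-1}$ there (using $\alpha-1>0$). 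Hence $\int_{\Omega'}|\nabla u_{n}|^{p}\,dx\le C_{\Omega'}^{1-\alpha}\int_{\Omega}u_{n}^{\alpha-1}|\nabla u_{n}|^{p}\,dx\le C\|f\|_{L^{1}}$. Adding the two pieces gives $\|u_{n}\|_{W^{1,p}(\Omega')}\le C\|f\|_{L^{1}(\Omega)}$ with $C$ depending on $\Omega'$.

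The main obstacle is conceptual rather than computational: the degenerate weight $u_{n}^{\alpha-1}$ produced by the power test function vanishes at the boundary, so coercivity is lost globally and only a local gradient estimate survives. The device that closes this gap is the uniform interior positivity $u_{n}\ge C_{\Omega'}>0$ from Lemma 2.2, which turns the degenerate integral into a genuine $\int_{\Omega'}|\nabla u_{n}|^{p}$. A secondary point to check carefully is the admissibility of $u_{n}^{\alpha}$ as a test function, which relies on the $L^{\infty}$ bound of Lemma 2.1; the precise numerical constant in the first inequality is then a matter of bookkeeping and may be absorbed into $C$.
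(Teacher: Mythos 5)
Your proof is correct. The first estimate is obtained exactly as in the paper: test with $u_{n}^{\alpha}$, use $u_{n}^{\alpha}(u_{n}+\tfrac{1}{n})^{-\alpha}\leqslant1$, and convert $\int_{\Omega}u_{n}^{\alpha-1}|\nabla u_{n}|^{p}\,dx$ into $\|\nabla u_{n}^{(p+\alpha-1)/p}\|_{L^{p}}^{p}$ via the chain rule. For the local estimate, however, you take a genuinely different and in fact shorter route. The paper localizes by testing with $u_{n}|\varphi|^{p}$ for a cutoff $\varphi\in C^{\infty}_{0}(\Omega)$, bounds the right-hand side by $f/C_{\Omega'}^{\alpha-1}$ using Lemma 2.2, and then needs Young's inequality with $\epsilon$ to absorb the cross term $p\int u_{n}|\varphi|^{p-2}\varphi|\nabla u_{n}|^{p-2}\nabla u_{n}\nabla\varphi\,dx$ into the left-hand side, finally invoking the $L^{p}$ bound on $u_{n}$. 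You instead recycle the weighted gradient bound $\alpha\int_{\Omega}u_{n}^{\alpha-1}|\nabla u_{n}|^{p}\,dx\leqslant\|f\|_{L^{1}}$ already produced by the first test function, and simply minorize the weight $u_{n}^{\alpha-1}\geqslant C_{\Omega'}^{\alpha-1}$ on $\Omega'$ using the same Lemma 2.2. This avoids the second test function and the Young absorption entirely; both arguments hinge on the uniform interior positivity, so nothing is lost, and your version makes it clearer that the only obstruction to a global $W^{1,p}$ bound is the degeneration of the weight at the boundary. (The discrepancy between the exponent of $\|f\|_{L^{1}}$ in your final bound and in the statement of the lemma is present in the paper's own proof as well and is immaterial.)
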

\begin{proof} Our proof is similar as that in \cite{LBLO}. We give a brief proof.
Multiplying the first identity in Problem $(2.1)$ by
$u^{\alpha}_{n}$ and integrating over $\Omega,$ we get
\begin{equation*}
\alpha\int_{\Omega}|\nabla
u_{n}|^{p}u_{n}^{\alpha-1}dx+\int_{\Omega}|u_{n}|^{p+\alpha-1}dx=\int_{\Omega}\frac{f_{n}u^{\alpha}_{n}}
{(u_{n}+\frac{1}{n})^{\alpha}}dx\leqslant\int_{\Omega}|f_{n}|dx
\leqslant\int_{\Omega}|f|dx,
\end{equation*}
and note that
\begin{align*}
\alpha(\frac{p}{p+\alpha-1})^{p}\int_{\Omega}|\nabla
u_{n}^{\frac{p+\alpha-1}{p}}|^{p}dx=\alpha\int_{\Omega}|\nabla
u_{n}|^{p}u_{n}^{\alpha-1}dx,
\end{align*}
i.e.\begin{equation}\|u^{\frac{p+\alpha-1}{p}}_{n}\|_{W^{1,p}_{0}}\leqslant
C\|f\|^{\frac{1}{p}}_{L^{1}(\Omega)}.\end{equation}

In order to prove that $u_{n}$ is bounded in
$W^{1,p}_{loc}(\Omega)$, we may choose $\varphi\in
C^{\infty}_{0}(\Omega),~\Omega'=\{x\in\Omega,~\varphi(x)\neq0\}.$
Multiplying the first identity in Problem $(2.1)$ by
$u_{n}|\varphi(x)|^{p}$ and integrating over $\Omega,$ we get
\begin{equation}
\begin{split}
\int_{\Omega}|\nabla
u_{n}|^{p}|\varphi|^{p}dx&+p\int_{\Omega}u_{n}|\varphi(x)|^{p-2}\varphi(x)|\nabla
u_{n}|^{p-2}\nabla u_{n}\nabla\varphi
dx=\int_{\Omega}\frac{f_{n}u_{n}|\varphi(x)|^{p}}{(u_{n}+\frac{1}{n})^{\alpha}}dx
\\&\leqslant\int_{\Omega}\frac{f}{C_{\Omega'}^{\alpha-1}}|\varphi|^{p}dx
\leqslant\frac{\|\varphi\|^{p}_{L^{\infty}(\Omega)}}{C^{\alpha-1}_{\Omega'}}\int_{\Omega}|f|dx.
\end{split}
\end{equation}
And applying $\textrm{Young's}$ inequality with $\epsilon,$ one has
\begin{equation}
\begin{split}
\Big|p\int_{\Omega}u_{n}|\varphi(x)|^{p-2}\varphi(x)|\nabla
u_{n}|^{p-2}\nabla u_{n}\nabla\varphi dx\Big|&\leqslant
p\int_{\Omega}u_{n}|\varphi(x)|^{p-1}|\nabla
u_{n}|^{p-1}|\nabla\varphi|dx\\
&\leqslant\frac{1}{2}\int_{\Omega}|\nabla
u_{n}|^{p}|\varphi|^{p}dx+\frac{p^{2}}{2}|u_{n}|^{p}|\nabla\varphi|^{p}dx.
\end{split}
\end{equation}
Combining Inequality $(4.2)$ with Inequality $(4.3)$, we have
\begin{equation}
\begin{split}
\int_{\Omega}|\nabla u_{n}|^{p}|\varphi|^{p}dx &\leqslant
\frac{\|\varphi\|^{p}_{L^{\infty}(\Omega)}}{C^{\alpha-1}_{\Omega'}}\int_{\Omega}|f|dx+
\frac{p^{2}}{2}|u_{n}|^{p}|\nabla\varphi|^{p}dx\\
&\leqslant
\frac{\|\varphi\|^{p}_{L^{\infty}(\Omega)}}{C^{\alpha-1}_{\Omega'}}|f|_{L^{1}(\Omega)}+
\frac{p^{2}\|\nabla\varphi\|^{p}_{L^{\infty}(\Omega)}}{2}\int_{\Omega}|u_{n}|^{p}dx.
\end{split}
\end{equation}
Then. by $(4.1)-(4.4)$, we get that $u_{n}$ is bounded in
$W^{1,p}_{Loc}(\Omega)$
\end{proof}
\begin{theorem}
Let $f$ be a nonnegative function in $L^{1}(\Omega)(f\not\equiv0)$.
Then Problem $(1.1)$ has a solution $u$ with
$u^{\frac{p+\alpha-1}{p}}\in W^{1,p}_{0}(\Omega)$ satisfying
\begin{equation}
\int_{\Omega}|\nabla u|^{p-2}\nabla u\nabla\varphi
dx+\int_{\Omega}|u|^{p-2}u\varphi
dx=\int_{\Omega}\frac{f\varphi}{u^{\alpha}}dx,~~\forall \varphi\in
C^{\infty}_{0}(\Omega).
\end{equation}
Moreover, suppose that $f\in L^{m}(m\geqslant1).$ Then the solution
$u$ of Problem $(1.1)$ has the properties
\begin{align*}
&~(i)~\text{If} ~~m>\frac{N}{p}~~and~~2-\frac{2}{m}<p<N, ~then ~u\in L^{\infty}(\Omega). \\
&~(ii)~\text{If}~~1\leqslant m<\frac{N}{p}, ~then ~u\in
L^{s}(\Omega), with~1<s\leqslant\frac{Nm(p+\alpha-1)}{N-pm}.
\end{align*}
\end{theorem}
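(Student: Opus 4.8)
The plan is to realize the solution as the monotone limit of the approximating solutions $u_n$ of Problem $(2.1)$ and to carry the uniform bounds of Lemma 4.1 to the limit. By Lemma 2.2 the sequence $u_n$ is nondecreasing, so $u_n\uparrow u$ pointwise a.e.\ for some measurable $u\ge0$. The first bound of Lemma 4.1 shows that $u_n^{(p+\alpha-1)/p}$ is bounded in $W^{1,p}_0(\Omega)$, hence (up to a subsequence) converges weakly there; identifying the weak limit with $u^{(p+\alpha-1)/p}$ through the a.e.\ convergence gives $u^{(p+\alpha-1)/p}\in W^{1,p}_0(\Omega)$. The second bound of Lemma 4.1 gives $u\in W^{1,p}_{loc}(\Omega)$, with $u_n\rightharpoonup u$ in $W^{1,p}(\Omega')$ and $u_n\to u$ in $L^p(\Omega')$ for every $\Omega'\subset\subset\Omega$. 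Throughout, the decisive tool is the local lower bound $u_n\ge C_{\Omega'}>0$ of inequality $(2.3)$.

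The main obstacle is the passage to the limit in the quasilinear term $|\nabla u_n|^{p-2}\nabla u_n$. Unlike the case $0<\alpha<1$, the singular right-hand side is \emph{not} globally integrable when $\alpha>1$: the bound $\|u_1^{-\alpha}\|_{L^{m'}}<\infty$ used in Theorem 3.2 requires $\alpha<1-\frac1m$ (Lemma 3.3 controls only negative powers $<1$), so the global truncation argument of that theorem breaks down, which is precisely why $u$ is claimed to lie only in $W^{1,p}_{loc}(\Omega)$. I would therefore localize: fix $\Omega'\subset\subset\Omega$ and a cut-off $\eta\in C^\infty_0(\Omega)$ with $0\le\eta\le1$ and $\eta\equiv1$ on $\Omega'$, and test the weak formulations of $(2.1)$ for $u_n$ and $u_k$ against the \emph{bounded} function $T_\varepsilon(u_n-u_k)\eta^p$. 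On the support of $\eta$ the lower bound gives $\frac{f_n}{(u_n+1/n)^\alpha}\le\frac{f}{C_{\Omega'}^\alpha}\in L^1$, so the singular contribution is $O(\varepsilon)$, the lower-order and cut-off cross terms are $O(\varepsilon)$ by the uniform $W^{1,p}_{loc}$ bound, and one arrives at $\int_{\{|u_n-u_k|\le\varepsilon\}}|\nabla T_\varepsilon(u_n-u_k)|^p\eta^p\,dx\le C\varepsilon$, the localized analogue of the estimate in Case 1 of the proof of Theorem 3.2. Together with $u_n\to u$ in $L^p(\Omega')$ this shows $\{\nabla u_n\}$ is Cauchy in measure on $\Omega'$, hence a subsequence of $\nabla u_n$ converges a.e.\ in $\Omega'$ to $\nabla u$. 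A clean alternative is a localized Minty argument: test $(2.1)$ with $u_n\eta$ and the limit equation with $u\eta$ (admissible after a truncation step justified by the lower bound) to get $\lim_n\int|\nabla u_n|^p\eta\,dx=\int \vec V\cdot\nabla u\,\eta\,dx$, with $\vec V$ the weak $L^{p'}_{loc}$-limit of $|\nabla u_n|^{p-2}\nabla u_n$, and then force $\int(|\nabla u_n|^{p-2}\nabla u_n-|\nabla u|^{p-2}\nabla u)\cdot\nabla(u_n-u)\eta\,dx\to0$. With $\nabla u_n\to\nabla u$ a.e., Vitali convergence of $|u_n|^{p-2}u_n$ against the bounded $\varphi$ and dominated convergence of $\frac{f_n\varphi}{(u_n+1/n)^\alpha}\to\frac{f\varphi}{u^\alpha}$ (bounded above by $\frac{f\|\varphi\|_\infty}{C_{\Omega'}^\alpha}$) let me pass to the limit and obtain $(4.5)$.

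For the regularity I argue on $u_n$ and conclude for $u$ by Fatou. In case (ii), $1\le m<\frac{N}{p}$, set $\delta=\frac{m(p+\alpha-1)(N-p)}{p(N-pm)}$ as in Theorem 3.1(ii); since $\alpha>1$ one checks $\delta=1+\frac{\alpha-1}{p}>1$ already at $m=1$ and $\delta$ is increasing in $m$, so $\delta>1$ on the whole range and $u_n^{p(\delta-1)+1}$ is an admissible test function with no regularization needed (this is what lets the range extend down to $m=1$, in contrast to Theorem 3.1). Inserting it into $(2.1)$, estimating the right-hand side by $\int f\,u_n^{p(\delta-1)+1-\alpha}\,dx\le\|f\|_{L^m}\big(\int u_n^{[p(\delta-1)+1-\alpha]m'}dx\big)^{1/m'}$, and applying the Sobolev inequality to $u_n^\delta$, the identity $\delta p^*=[p(\delta-1)+1-\alpha]m'$ forces the two powers to agree and yields the uniform bound $\|u_n\|_{L^s}\le C\|f\|_{L^m}^{1/(p+\alpha-1)}$ with $s=\delta p^*=\frac{Nm(p+\alpha-1)}{N-pm}$.

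In case (i), $m>\frac{N}{p}$ and $2-\frac{2}{m}<p<N$, I repeat the Morse iteration of Theorem 2.1(i) with the sequences $\beta_k,F_k$, starting from the base integrability $u_n\in L^{(p+\alpha-1)p^*/p}(\Omega)$ furnished by $u_n^{(p+\alpha-1)/p}\in W^{1,p}_0(\Omega)$. The only change is that the singular denominator lowers the powers of $u_n$ on the right-hand side, $\frac{f_n}{(u_n+1/n)^\alpha}u_n^{\gamma+1}\le f\,u_n^{\gamma+1-\alpha}$ with $\gamma+1-\alpha<\gamma$ because $\alpha>1$, so the recursion involves only integrals of $f$ against (eventually positive) powers of $u_n$ and closes under the same hypotheses, giving a uniform bound $\|u_n\|_{L^\infty}\le e^{d_0}$ and hence $u\in L^\infty(\Omega)$. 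I expect the existence step, and specifically the localized gradient convergence under the non-globally-integrable singular term, to be the genuine difficulty; the regularity estimates are routine adaptations of Theorems 2.1 and 3.1.
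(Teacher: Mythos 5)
Your proposal follows the same overall skeleton as the paper: approximate by Problem $(2.1)$, use the monotonicity of $u_n$ and the local lower bound $(2.3)$, pass to the limit in the singular term by dominated convergence, and prove the regularity statements by testing with $u_n^{p(\delta-1)+1}$ (case (ii)) and by the Morse iteration of Theorem 2.1 (case (i)); your part (ii) computation, including the identity $\delta p^{*}=[p(\delta-1)+1-\alpha]\frac{m}{m-1}$ and the observation that $\delta>\frac{p+\alpha-1}{p}>1$ so no regularization of the test function is needed, is exactly the paper's. Where you genuinely diverge is the existence step. The paper disposes of it with the single sentence ``following the lines of the proof of Theorem 2.1,'' but that proof rests on the global bound $\|u_n\|_{W^{1,p}_0(\Omega)}\leqslant\|f\|_{L^1(\Omega)}^{1/p}$, which is obtained by testing with $u_n$ and is \emph{not} available when $\alpha>1$ (testing with $u_n$ produces $\int f_n u_n^{1-\alpha}$, which is not controlled by $\|f\|_{L^1}$); Lemma 4.1 only supplies $u_n^{(p+\alpha-1)/p}$ bounded in $W^{1,p}_0(\Omega)$ and $u_n$ bounded in $W^{1,p}_{loc}(\Omega)$. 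You correctly identify this and repair it by localizing the monotonicity/truncation argument with a cut-off $\eta^p$, using $(2.3)$ to make the right-hand side $f/C_{\Omega'}^{\alpha}\in L^1$ on $\mathrm{supp}\,\eta$ and the uniform $W^{1,p}_{loc}$ bound to absorb the cut-off cross terms; this yields a.e.\ convergence of $\nabla u_n$ on each $\Omega'\subset\subset\Omega$, which suffices to pass to the limit against $\varphi\in C^\infty_0(\Omega)$. Your version is therefore more complete than the paper's on the one point where the paper's cross-reference does not literally apply. Two small things to tidy: the monotonicity inequality $\int(|\nabla u_n|^{p-2}\nabla u_n-|\nabla u_k|^{p-2}\nabla u_k)\cdot\nabla(u_n-u_k)\eta^p\leqslant C\varepsilon$ controls $|\nabla(u_n-u_k)|^p$ directly only for $p\geqslant2$ and needs the standard $(|\nabla u_n|+|\nabla u_k|)^{p-2}$ interpolation for $1<p<2$ (the paper is equally silent on this in Theorem 3.2); and in case (ii) the endpoint $m=1$ should be handled separately, since $m'=\infty$ degenerates the H\"older step, though there the conclusion $s\leqslant\frac{N(p+\alpha-1)}{N-p}$ already follows from Lemma 4.1 and the Sobolev embedding applied to $u_n^{(p+\alpha-1)/p}$.
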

\begin{proof} $\mathbf{Part 1}$ (Existence).
Following the lines of the proof of Theorem 2.1, we obtain the
existence of the solution of Problem $ (1,1).$

$\mathbf{Part 2}$ (Regularity). (i) Using a trick of Theorem 2.1, it
is easy to see that the first conclusion holds.

 (ii).~Define $\delta=\frac{m(p+\alpha-1)(N-p)}{p(N-m p)}>\frac{p+\alpha-1}{p}>1.$
Choosing $u_{n}^{p(\delta-1)+1}$ as a test function, and applying
$\hbox{H\"{o}lder's}$ inequality,  we arrive at
\begin{equation}
\begin{split}
(p(\delta-1)+1)\int_{\Omega}|\nabla u_{n}|^{p}u_{n}^{p(\delta-1)}dx+
\int_{\Omega}u_{n}^{p\delta}dx&=\int_{\Omega}\frac{f_{n}}{(u_{n}+\frac{1}{n})^{\alpha}}u_{n}^{p(\delta-1)+1}dx
dx\\&\leqslant\int_{\Omega}f(x)u_{n}^{p(\delta-1)+1-\alpha}dx\\
&\leqslant\Big(\int_{\Omega}u_{n}^{(p(\delta-1)+1-\alpha)\frac{m}{m-1}}dx\Big)^{\frac{m-1}{m}}\|f\|_{L^{m}(\Omega)}.
\end{split}
\end{equation}
Moreover, applying $\textrm{Sobolev}$ embedding theorem and using
Inequality $(2.11)$, we get
\begin{equation}
\begin{split}
\Big(\int_{\Omega}|u_{n}|^{\frac{\delta
Np}{N-p}}dx\Big)^{\frac{N-p}{N}}&\leqslant C\int_{\Omega}|\nabla
u_{n}^{\delta}|^{p}dx\leqslant
C\delta^{p}\int_{\Omega}u_{n}^{p(\delta-1)}|\nabla u_{n}|^{p}dx
\\&\leqslant\Big(\int_{\Omega}u_{n}^{p(\delta-1+1-\alpha)\frac{m}{m-1}}dx\Big)^{\frac{m-1}{m}}\|f\|_{L^{m}(\Omega)}.
\end{split}
\end{equation}

Using the definition of $\delta$, we get $\delta
p^{*}=(p(\delta-1)+1-\alpha)\frac{m}{m-1},$ then for all
$~1\leqslant s\leqslant\frac{Nm(p+\alpha-1)}{N-mp}$, we have
\begin{equation*}
\|u_{n}\|_{L^{s}(\Omega)}\leqslant
C\|f\|^{\frac{1}{p+\alpha-1}}_{L^{m}(\Omega)}.
\end{equation*}
\end{proof}

Second, we study the existence and nonexistence of positive solutions in the case $f(x)\in L^{\infty}(\Omega).$ Our main results are as follows
\begin{theorem}If
$1<\alpha<2,~~f(x)\in L^{\infty}(\Omega),$ then Problem $(1.1)$ has a unique positive solution in $W^{1,p}_{0}(\Omega).$
\end{theorem}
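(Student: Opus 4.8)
The plan is to get existence by the monotone approximation used throughout the paper, and to get uniqueness by a comparison argument exploiting the monotonicity of the operator together with the strict monotonicity of $s\mapsto s^{-\alpha}$; the crucial input in both parts is the estimate $\int_{\Omega}u_{1}^{-r}\,dx<\infty$ ($r<1$) from Lemma 3.3.

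\textbf{Existence.} I would start from the approximating solutions $u_{n}$ of Problem $(2.1)$, which by Lemma 2.2 are increasing and satisfy $u_{n}\geqslant u_{1}>0$ on every $\Omega'\subset\subset\Omega$. The key new point is a uniform $W^{1,p}_{0}$ bound. Testing the $n$-th equation with $u_{n}$ gives
\begin{equation*}
\int_{\Omega}|\nabla u_{n}|^{p}\,dx+\int_{\Omega}u_{n}^{p}\,dx=\int_{\Omega}\frac{f_{n}u_{n}}{(u_{n}+\frac{1}{n})^{\alpha}}\,dx\leqslant\int_{\Omega}f\,u_{n}^{1-\alpha}\,dx\leqslant\|f\|_{L^{\infty}}\int_{\Omega}u_{1}^{1-\alpha}\,dx,
\end{equation*}
where I used $u_{n}\geqslant u_{1}$ together with $1-\alpha<0$. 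Since $1<\alpha<2$ forces $\alpha-1<1$, Lemma 3.3 yields $\int_{\Omega}u_{1}^{-(\alpha-1)}\,dx<\infty$, so the right-hand side is finite and independent of $n$. Thus $u_{n}$ is bounded in $W^{1,p}_{0}(\Omega)$ and, being increasing, converges to some $u\in W^{1,p}_{0}(\Omega)$ with $u\geqslant u_{1}>0$. Passing to the limit is then exactly as in Theorem 2.1: the Minty-type monotonicity argument (using $u_{n}-\xi$ as test function) identifies the weak limit of $|\nabla u_{n}|^{p-2}\nabla u_{n}$ as $|\nabla u|^{p-2}\nabla u$, while on $\Omega'=\{\varphi\neq0\}$ the domination $0\leqslant\frac{f_{n}\varphi}{(u_{n}+1/n)^{\alpha}}\leqslant\frac{\|\varphi\|_{L^{\infty}}}{C_{\Omega'}^{\alpha}}f$ lets Lebesgue's theorem give $\int_{\Omega}f\varphi u^{-\alpha}\,dx$. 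This produces a positive weak solution; moreover $f\in L^{\infty}$ means $m>N/p$, so Theorem 4.1(i) gives $u\in L^{\infty}(\Omega)$.

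\textbf{Uniqueness.} Let $u,v\in W^{1,p}_{0}(\Omega)$ be two positive solutions and test the difference of their weak formulations with $(u-v)_{+}$. Using $(|\nabla u|^{p-2}\nabla u-|\nabla v|^{p-2}\nabla v)\nabla(u-v)_{+}\geqslant0$ and the monotonicity of $s\mapsto s^{p-1}$, the left-hand side is nonnegative; on $\{u>v\}$ the right-hand side is $\int f\big(u^{-\alpha}-v^{-\alpha}\big)(u-v)_{+}\,dx\leqslant0$ because $s\mapsto s^{-\alpha}$ is strictly decreasing and $f\geqslant0$. Hence both sides vanish, $\nabla(u-v)_{+}=0$ a.e., and since $(u-v)_{+}\in W^{1,p}_{0}(\Omega)$ we get $(u-v)_{+}=0$, i.e. $u\leqslant v$; interchanging $u$ and $v$ gives $u=v$.

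\textbf{Main obstacle.} The delicate step is the admissibility of the non-compactly supported test function $(u-v)_{+}$ in the presence of the boundary singularities $u^{-\alpha},v^{-\alpha}$ with $\alpha\in(1,2)$, an exponent range not directly covered by Lemma 3.3. The first singular term is harmless: $(u-v)_{+}\leqslant u$ and $u\geqslant u_{1}$ give $\frac{(u-v)_{+}}{u^{\alpha}}\leqslant u^{1-\alpha}\leqslant u_{1}^{-(\alpha-1)}$, which is $f$-integrable since $\alpha-1<1$. The genuine work is the $v^{-\alpha}$ term, where I would first establish that every positive $W^{1,p}_{0}$ solution is trapped between constant multiples of $u_{1}$ near $\partial\Omega$, say $c\,u_{1}\leqslant u,v\leqslant C\,u_{1}$, via weak comparison with sub- and supersolutions built from $u_{1}$. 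Granting this, on $\{u>v\}$ one has $\frac{(u-v)_{+}}{v^{\alpha}}\leqslant C\,u_{1}^{1-\alpha}$, again integrable by Lemma 3.3, so both singular integrals are finite and $(u-v)_{+}$ is a legitimate test function. Proving this two-sided boundary comparison—rather than the monotonicity bookkeeping, which is routine—is where the real difficulty lies.
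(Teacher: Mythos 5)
Your existence argument is exactly the paper's: test Problem $(2.1)$ with $u_{n}$, use the monotonicity $u_{n}\geqslant u_{1}$ from Lemma 2.2 together with $1-\alpha<0$ to bound the right-hand side by $\|f\|_{L^{\infty}(\Omega)}\int_{\Omega}u_{1}^{1-\alpha}\,dx$, invoke Lemma 3.3 with $r=\alpha-1<1$ to see this is finite, and then repeat the limit passage of Theorem 2.1. The one place you diverge is uniqueness: the theorem asserts a \emph{unique} positive solution, but the paper's proof stops after the a priori estimate and says nothing about uniqueness at all, so your comparison argument with $(u-v)_{+}$ is genuinely additional content rather than a variant of something in the paper. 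The idea is the right one (monotonicity of the $p$-Laplacian plus strict decrease of $s\mapsto s^{-\alpha}$ forces $(u-v)_{+}=0$), and you correctly identify the real obstruction, namely that the weak formulation is only stated against $C^{\infty}_{0}(\Omega)$ and extending it to the non-compactly supported test function $(u-v)_{+}$ requires controlling $\int_{\Omega}f\,v^{-\alpha}(u-v)_{+}\,dx$ near $\partial\Omega$. But that step is only asserted: the two-sided bound $c\,u_{1}\leqslant u,v\leqslant C\,u_{1}$ is claimed ``via weak comparison with sub- and supersolutions built from $u_{1}$'' without construction, and for a singular right-hand side with $\alpha\in(1,2)$ such a comparison principle is not automatic (indeed the absence of any uniqueness argument in the paper suggests this is where the difficulty was). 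So: existence is complete and identical to the paper; uniqueness is a plausible and well-diagnosed sketch whose key lemma (the boundary trapping of an arbitrary positive $W^{1,p}_{0}$ solution between multiples of $u_{1}$) remains unproven. A cheaper route you might consider is to prove uniqueness only within the class of solutions obtained as limits of the $u_{n}$, or to use $T_{\varepsilon}(u-v)$-type truncations (as in the proof of Theorem 3.2) to avoid ever integrating $v^{-\alpha}$ against a function that is not dominated by $v$ itself.
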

\begin{proof}
Multiplying the first identity in Problem $(2.1)$ by $u_{n}$, integrating over $\Omega,$
and applying Lemma 2.2 and Lemma 3.3, we get
\begin{equation*}
\int_{\Omega}|\nabla
u_{n}|^{p}dx+\int_{\Omega}|u_{n}|^{p}dx=\int_{\Omega}\frac{f_{n}u_{n}}
{(u_{n}+\frac{1}{n})^{\alpha}}dx\leqslant\int_{\Omega}|f_{n}|u^{1-\alpha}_{1}dx
\leqslant\|f\|_{L^{\infty}(\Omega)}\|u^{(1-\alpha)}_{1}\|_{L^{1}(\Omega)},
\end{equation*}
ie. \begin{align*} \|u_{n}\|_{W^{1,p}_{0}(\Omega)}\leqslant(\|f\|_{L^{\infty}(\Omega)}\|u^{(1-\alpha)}_{n}\|_{L^{1}(\Omega)})^{\frac{1}(p)}<\infty.\end{align*}
Once $W^{1,p}_{0}(\Omega)$ estimates are obtained,  we may prove the existence of solutions with similar methods of the proof of Theorem 2.1.
\end{proof}
Next, we consider $\alpha>2.$ Similarly as the proof of Lemma 3.3 or according to Lemma 3.2 in
\cite{NHLKS}, we have
\begin{lemma}
 Let $\Phi\in C(\bar{\Omega})$ be
an eigenfunction corresponding the first eigenvalue $\lambda$ of the
following problem
\begin{equation*}
\begin{cases}
-\Delta_{p}\Phi=\lambda \Phi^{p-1},~~&x\in\Omega;\\
\Phi>0,~~&x\in\Omega;\\
\Phi=0,~~&x\in\partial\Omega,
\end{cases}
\end{equation*}
then, $\int_{\Omega}\Phi^{r}dx<\infty\Longleftrightarrow r>-1.$
\end{lemma}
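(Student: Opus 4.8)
The plan is to mimic the proof of Lemma 3.3: show that the first eigenfunction $\Phi$ is comparable to the distance to the boundary near $\partial\Omega$, and then reduce the question to the classical integrability of powers of the distance function. Throughout write $d(x)=\hbox{dist}(x,\partial\Omega)$.

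First I would record the regularity of $\Phi$. Since $\Phi\in C(\bar{\Omega})$ and $\bar{\Omega}$ is compact, $\Phi$ is bounded, hence the right-hand side $\lambda\Phi^{p-1}\in L^{\infty}(\Omega)$. Exactly as in the proof of Lemma 3.3 (via Lemma 2.2 of \cite{ADRD}), there exists $0<\beta<1$ with $\Phi\in C^{1,\beta}(\bar{\Omega})$ and $\|\Phi\|_{C^{1,\beta}(\bar{\Omega})}\leqslant C$; in particular $\nabla\Phi$ exists up to the boundary and $|\nabla\Phi|\leqslant C$ on $\bar{\Omega}$. Next I would establish the two-sided comparison $c_{1}d(x)\leqslant\Phi(x)\leqslant c_{2}d(x)$ in a boundary neighborhood. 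The upper bound is immediate from $\Phi|_{\partial\Omega}=0$ together with the bounded gradient, by the mean value inequality along the inward normal segment. For the lower bound I would invoke Hopf's Lemma in \cite{JLV} for the $p$-Laplacian, applied to the positive function $\Phi$ which satisfies $-\Delta_{p}\Phi=\lambda\Phi^{p-1}\geqslant0$; this forces the inward normal derivative of $\Phi$ to be strictly positive on the compact boundary, so $|\nabla\Phi|\geqslant c_{0}>0$ in a neighborhood of $\partial\Omega$, whence $\Phi(x)\geqslant c_{1}d(x)$ there.

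Finally I would split $\int_{\Omega}\Phi^{r}dx$ into an interior part and a tubular boundary part. On any $\Omega'\subset\subset\Omega$ the function $\Phi$ is bounded above and below by positive constants, so $\Phi^{r}$ is bounded and contributes a finite amount for every $r$. In the boundary strip the comparison $\Phi(x)\asymp d(x)$ reduces $\int\Phi^{r}dx$ to $\int d(x)^{r}dx$ up to multiplicative constants, and the classical fact $\int_{\Omega}d(x)^{r}dx<\infty\Longleftrightarrow r>-1$ (obtained by flattening the smooth boundary and computing $\int_{0}^{\delta}t^{r}\,dt$ times the finite surface measure) yields exactly the stated equivalence. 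The main obstacle is the sharp two-sided estimate $\Phi(x)\asymp d(x)$: it rests on combining the global $C^{1,\beta}$ bound (for the upper bound) with Hopf's Lemma for the degenerate operator (for the lower bound), and on the smoothness of $\partial\Omega$ so that $d$ is itself regular in the tubular neighborhood and the reduction to the one-dimensional integral $\int_{0}^{\delta}t^{r}\,dt$ is legitimate.
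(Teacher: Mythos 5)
Your proposal is correct and follows essentially the same route the paper intends: the paper proves Lemma 4.2 by pointing to the proof of Lemma 3.3, which is exactly the argument you give ($C^{1,\beta}$ regularity from the bounded right-hand side via Lemma 2.2 of \cite{ADRD}, Hopf's Lemma from \cite{JLV} for the lower bound, and the Lazer--McKenna comparison $\Phi\asymp d(x)$ near $\partial\Omega$ reducing the question to $\int_{0}^{\delta}t^{r}\,dt$). You have in fact written out the details more completely than the paper does.
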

\begin{theorem}
If $\alpha>2,f(x)\in L^{\infty}(\Omega),~\hbox{and}~f(x)\geqslant
\Phi^{\frac{1}{p-1+\alpha}},~in~\overline{\Omega},$ then the
solution of Problem $(1.1)$ is not in $W^{1,p}_{0}(\Omega).$
\end{theorem}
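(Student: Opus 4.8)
The plan is to argue by contradiction: assume the weak solution $u$ does lie in $W^{1,p}_{0}(\Omega)$ and produce a quantity that Lemma 4.2 forces to diverge while the equation forces it to be finite. Throughout I use that $u=\lim_{n}u_{n}$ with $u_{n}$ increasing in $n$ (Lemma 2.2), that $u^{(p+\alpha-1)/p}\in W^{1,p}_{0}(\Omega)$ (Theorem 4.1), and that $\Phi\in W^{1,p}_{0}(\Omega)\cap C(\overline{\Omega})$ with $|\nabla\Phi|\geqslant c>0$ near $\partial\Omega$ (the Hopf argument already used for Lemma 4.2).

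First (admissibility of $\Phi$ as a test function), since the weak formulation is stated only for $\varphi\in C^{\infty}_{0}(\Omega)$, I would pick $0\leqslant\varphi_{k}\uparrow\Phi$ with $\varphi_{k}\in C^{\infty}_{0}(\Omega)$ and $\nabla\varphi_{k}\to\nabla\Phi$ in $L^{p}(\Omega)$. Inserting $\varphi_{k}$ and letting $k\to\infty$, the two terms on the left converge (they are continuous in $\varphi$ once $u\in W^{1,p}_{0}$), whereas the right-hand side increases to $\int_{\Omega}f\Phi u^{-\alpha}\,dx$ by monotone convergence. Hence, if $u\in W^{1,p}_{0}(\Omega)$, the left-hand side is finite and therefore
\[
\int_{\Omega}\frac{f\,\Phi}{u^{\alpha}}\,dx<\infty .
\]

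Second (an upper boundary bound for $u$), I would pin down the boundary decay rate of $u$ by comparison. Because $t\mapsto f/t^{\alpha}$ is decreasing, the regularized problem (2.1) admits the weak comparison principle, so it suffices to exhibit a supersolution built from a power of $\Phi$. Using $-\Delta_{p}\Phi=\lambda\Phi^{p-1}$ one computes, for $z=\Phi^{s}$ with $0<s<1$, that $-\Delta_{p}z\sim\Phi^{(s-1)(p-1)-1}$ near $\partial\Omega$; matching this against $\|f\|_{\infty}\,z^{-\alpha}=\|f\|_{\infty}\Phi^{-s\alpha}\geqslant f_{n}(|z|+\tfrac1n)^{-\alpha}$ identifies $s=p/(p+\alpha-1)$ and, after fixing the constant, shows $C\Phi^{p/(p+\alpha-1)}$ is a supersolution. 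Comparison then gives $u_{n}\leqslant C\Phi^{p/(p+\alpha-1)}$ uniformly in $n$, hence $u\leqslant C\Phi^{p/(p+\alpha-1)}$. (The companion subsolution $c\Phi^{s'}$, using the lower bound $f\geqslant\Phi^{1/(p+\alpha-1)}$, recovers the matching lower bound and the Lemma 4.2–type integrability.)

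Third (the contradiction), feeding the upper bound together with $f\geqslant\Phi^{1/(p+\alpha-1)}$ into the first step,
\[
\int_{\Omega}\frac{f\,\Phi}{u^{\alpha}}\,dx\;\geqslant\;C\int_{\Omega}\Phi^{\,r}\,dx,\qquad r=\frac{1}{p+\alpha-1}+1-\frac{\alpha p}{p+\alpha-1},
\]
so everything reduces to checking $r\leqslant-1$, which by Lemma 4.2 makes the right-hand integral diverge and contradicts the finiteness above; this last inequality is exactly where the hypotheses $\alpha>2$ and $f\geqslant\Phi^{1/(p+\alpha-1)}$ are meant to enter. Equivalently, substituting $w=u^{(p+\alpha-1)/p}\in W^{1,p}_{0}(\Omega)$ and writing $\int_{\Omega}|\nabla u|^{p}\,dx=c\int_{\Omega}w^{p(1-\alpha)/(p+\alpha-1)}|\nabla w|^{p}\,dx$, the same boundary rate turns $\int_{\Omega}|\nabla u|^{p}\,dx$ into an integral of the form $\int_{\Omega}\Phi^{-s}\,dx$ which diverges by Lemma 4.2, giving $u\notin W^{1,p}_{0}(\Omega)$ directly. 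The main obstacle is the sharp two-sided boundary comparison in the second step: the nonlinear operator and the extra zeroth-order term $|u|^{p-2}u$ make the construction of ordered sub/supersolutions from powers of $\Phi$ delicate, since one must absorb $|\nabla\Phi|$ (only controlled near $\partial\Omega$) and dominate the lower-order term on all of $\Omega$, and the concluding exponent bookkeeping must be verified to yield $r\leqslant-1$ in the asserted range.
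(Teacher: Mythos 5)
Your overall architecture (contradiction, an integral identity whose right-hand side is forced to diverge by the boundary rate $u\leqslant b\Phi^{p/(p+\alpha-1)}$ together with Lemma 4.2) matches the paper, and your second step is exactly the comparison estimate the paper imports from the Hoang Loc--Schmitt reference. The genuine gap is your choice of test function. Testing with $\Phi$ gives
\begin{equation*}
\int_{\Omega}\frac{f\,\Phi}{u^{\alpha}}\,dx\;\geqslant\;C\int_{\Omega}\Phi^{r}\,dx,\qquad r=\frac{p+\alpha-\alpha p}{p+\alpha-1},
\end{equation*}
and the condition $r\leqslant-1$ that you defer to ``exponent bookkeeping'' is equivalent to $\alpha(p-2)\geqslant 2p-1$. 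This is false for every $\alpha$ when $1<p\leqslant 2$, and for $p>2$ it forces $\alpha\geqslant\frac{2p-1}{p-2}$, far above the hypothesis $\alpha>2$. The reason it fails is structural: $u$ decays like $\Phi^{\sigma}$ with $\sigma=\frac{p}{p+\alpha-1}<1$, i.e.\ more slowly than $\Phi$, so multiplying the singular term $f u^{-\alpha}$ by the extra factor $\Phi$ destroys exactly the part of the singularity you need. The paper instead tests with $u$ itself: it takes $Z_{n}\in C^{\infty}_{0}(\Omega)$ with $Z_{n}\to u$ in $W^{1,p}_{0}(\Omega)$, inserts $Z_{n}^{+}$ into the weak formulation, and passes to the limit (weak convergence on the left, Fatou on the right) to conclude that $u\in W^{1,p}_{0}(\Omega)$ would force $\int_{\Omega}f\,u^{1-\alpha}\,dx<\infty$; then $f u^{1-\alpha}\geqslant C\,\Phi^{\frac{1}{p+\alpha-1}+\frac{p(1-\alpha)}{p+\alpha-1}}$ and Lemma 4.2 give the contradiction. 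Testing with $u$ rather than $\Phi$ is strictly stronger and is the missing idea.

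Two further cautions. First, even the paper's exponent should be checked: $\frac{1+p-p\alpha}{p+\alpha-1}\leqslant-1$ amounts to $\alpha\geqslant\frac{2p}{p-1}$, not to $\alpha>2$, so the bookkeeping you flagged as delicate is indeed where the argument is tight and neither your version nor a careless reading of the printed one closes at $\alpha$ slightly above $2$. Second, your fallback route --- substituting $w=u^{(p+\alpha-1)/p}\in W^{1,p}_{0}(\Omega)$ and claiming $\int_{\Omega}|\nabla u|^{p}\,dx$ reduces to a divergent $\int_{\Omega}\Phi^{-s}\,dx$ --- is not equivalent to the first and is not completed: it requires a pointwise lower bound on $|\nabla w|$ (equivalently on $|\nabla u|$) near $\partial\Omega$, which neither Lemma 4.2 nor the sub/supersolution comparison provides.
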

\begin{proof}
With similar method in \cite{NHLKS}, we know that there exist
$b>0~\hbox{and}~\sigma=\frac{p}{p-1+\alpha}$  such that
$0<u(x)\leqslant b\Phi^{\sigma}(x)~in~\Omega.$  Next, we argue by
contradiction. Suppose that the solution $u$ of Problem $(1.1)$
belongs to $W^{1,p}_{0}(\Omega).$ Since $C^{\infty}_{0}(\Omega)$ is
dense in $W^{1,p}_{0}(\Omega)$, we may choose a sequence
$\{Z_{n}\}\subseteqq C^{\infty}_{0}(\Omega)$ satisfying
$$Z_{n}\rightarrow
u,~~in~~W^{1,p}_{0}(\Omega),~~~as~~n\rightarrow\infty.$$ By
\cite{GDMFMLAP,LOAUNN}, we have $Z_{n}^{+}=\max\{Z_{n},0\}\in
W^{1,p}_{0}(\Omega),~n=1,2,3\cdots.$ By means of $u\geqslant 0$, it
is easy to prove that
\begin{align*}&Z_{n}^{+}\rightarrow
u,~~in~~W^{1,p}_{0}(\Omega),~~~as~~n\rightarrow\infty;\\
\end{align*}
Furthermore, According to F.Riesz theorem in\cite{AKJOSF}, we may assume that
$Z_{n}^{+}$ converges to $u$ almost everywhere.   Choosing
$Z_{n}^{+}$ as a test-function in (4.5), we have
\begin{align}
\int_{\Omega}|\nabla u|^{p-2}\nabla u\nabla Z_{n}^{+}dx+
a\int_{\Omega}|u|^{p-2}u
Z_{n}^{+}dx=\int_{\Omega}\frac{f(x)}{u^{\alpha}}Z_{n}^{+}dx.
\end{align}
By the definition of weak convergence, $\hbox{Poincar\'{e}'s}$
inequality and $\hbox{Fatou's}$ lemma, we get
\begin{align}
&\int_{\Omega}|\nabla u|^{p}dx\geqslant \frac{1}{}\int_{\Omega}(|\nabla
u|^{p}+|u|^{p})dx
=\lim\limits_{n\rightarrow\infty}\int_{\Omega}(|\nabla
u|^{p-2}\nabla u\nabla Z_{n}^{+}+ |u|^{p-2}u
Z_{n}^{+})dx;\\
&\int_{\Omega}f(x)u^{1-\alpha}dx\leqslant
\lim\limits_{n\rightarrow\infty}\int_{\Omega}f(x)Z_{n}^{+}u^{-\alpha}dx
\end{align}

Again by $f(x)\geqslant
\Phi^{\frac{1}{p-1+\alpha}},~in~\overline{\Omega}$ and
$0<u(x)\leqslant b\Phi^{\sigma}(x),~in~\Omega,$ we have
\begin{align}
\int_{\Omega}f(x)u^{1-\alpha}dx\geqslant C(M,\alpha,p)
\int_{\Omega}\Phi^{\sigma(1-\alpha)+\frac{\sigma}{p}}dx=+\infty~~\Longleftarrow
\alpha>2.
\end{align}
By $(4.8)-(4.11),$  we  have
$$\int_{\Omega}|\nabla u|^{p}dx=+\infty,$$ which contradicts the
assumption that $u\in W^{1,p}_{0}(\Omega).$
\begin{remark}Due to technical reasons, the authors have to give a technical
condition to $f(x)$. It is not clear whether the problem still has
no a solution  in $W^{1,p}_{0}(\Omega)$ if the restriction that $f(x)\geqslant
\Phi^{\frac{1}{p-1+\alpha}}$ is
removed. Besides, what happens to the solution in the case when $\alpha=2$? whether is $\alpha=2$ a critical exponent or not? Up to now, this is still an open problem.
\end{remark}
\noindent{\bf ACKNOWLEDGMENTS:} The first author thanks the
hospitality of the Department of Mathematics, Michigan State
University, and the first author¡¯s research was partially supported
by the China Scholarship Council, partially supported by NSFC
(11301211) and by Fundamental Research Funds of Jilin
University(450060491473). The second author's research was partially
supported by NSFC (11271154) and by the 985 program of Jilin
University
\end{proof}

\end{document}